\documentclass[10pt, fleq]{amsart}
\usepackage{amssymb,amsmath,latexsym,amsbsy,color,enumerate}
\numberwithin{equation}{section}

\def\RR{\mbox{$I\hspace{-.06in}R$}}
\def\CC{\mbox{$C\hspace{-.11in}\protect\raisebox{.5ex}{\tiny$/$}
\hspace{.06in}$}}

\newtheorem{theorem}{Theorem}

\newtheorem{lemma}{Lemma}
\newtheorem{remark}{Remark}
\newtheorem{proposition}{Proposition}

\begin{document}
 \title{The growth at infinity of a sequence of entire functions of bounded orders}
\author{Dang Duc Trong}
    \address{Department of Mathematics, University of natural sciences, Hochiminh city,
    Vietnam}
 \email{ddtrong@mathdep.hcmuns.edu.vn}
\author{Truong Trung Tuyen}
    \address{Department of Mathematics, Indiana University Bloomington, IN 47405 USA}
 \email{truongt@indiana.edu}
\thanks{}
    \date{\today}
    \keywords{Capacity; Entire functions of genus zero; Geometric rate growth; Non-thin set}
    \subjclass[2000]{30C85, 30D15, 31A15.}
    \begin{abstract}
In this paper we shall consider the growth at infinity of a sequence $(P_n)$ of entire functions of bounded orders. Our results extend the results in
\cite{trong-tuyen2} for the growth of entire functions of genus zero. Given a sequence of entire functions of bounded orders $P_n(z)$, we found a nearly
optimal condition, given in terms of zeros of $P_n$, for which $(k_n)$ that we have
\begin{eqnarray*}
\limsup _{n\rightarrow\infty}|P_n(z)|^{1/k_n}\leq 1
\end{eqnarray*}
for all $z\in \mathbb C$ (see Theorem \ref{theo5}). Exploring the growth of a sequence of entire functions of bounded orders lead naturally to an
extremal function which is similar to the Siciak's extremal function (See Section 6).
\end{abstract}
\maketitle
\section{Introduction and main results.}       
Let $P(z):\mathbb C\rightarrow\mathbb C$ be an entire function. We recall that (see Lecture 1 in \cite{lev}): if
$$P(z)=\sum _{n=0}^{\infty}a_nz^n$$
then its order $\rho$ is
$$\rho =\limsup_{n\rightarrow\infty}\frac{n\log n}{\log (1/|a_n|)}.$$
An entire function is called of genus zero if its order is less than $1$.

If $P(z)$ is an entire function of finite order $\rho$, by Hadamard factorization theorem (see Theorem 1 page 26 in \cite{lev}), $P(z)$ has a
representation
\begin{equation}
P(z)=az^me^{W_q(z)}\prod _{j}G(\frac{z}{z_j},p)\label{Sec8.1}
\end{equation}
where $p=[\rho ]$ the integer part of $\rho$, $W_q(z)$ is a polynomial of degree $q\leq \rho$ with $W_q(0)=0$, $z_j's$ are zeros of $P(z)$ different from
$0$, and
\begin{eqnarray*}
G(z,p)=(1-z)\exp \{z+\frac{z^2}{2}+\ldots +\frac{z^p}{p}\}.
\end{eqnarray*}
For an entire function $P(z)$ of order $\rho$ with representation (\ref{Sec8.1}) we define its degree $d^*(P)$ by
\begin{equation}
d^*(P)=m+\sup _{|z|\leq 1}|W_q (z)|+\sum _{|z_j|\leq 1}\frac{1}{|z_j|^p}+\sum _{|z_j|> 1}\frac{1}{|z_j|^{p+1}}.\label{Sec8.2}
\end{equation}

Remark: If $P(z)$ is an entire function of genus zero then in the representation (\ref{Sec8.1}) we have $q=p=0$. If $P(z)$ is a polynomial then
$d^*(P)\leq deg(P)$ where $deg(P)$ is the usual degree of $P(z)$.

The growth at infinity of entire functions is a topic of great concernment. Let $(P_n)$ be a sequence of entire functions and let $(k_n)$ be a sequence
of positive numbers. It is an interesting question that for which sequences $(P_n)$ and $(k_n)$  we have
\begin{eqnarray*}
\limsup _{n\rightarrow\infty}|P_n(z)|^{1/k_n}\leq 1
\end{eqnarray*}
for all $z\in \mathbb C$. This question arises naturally as we want to show that a series
\begin{equation}
s(z)=\sum _{n=1}^{\infty}P_n(z)\label{series}
\end{equation}
converges locally uniformly to an entire function. Usually we want to find a sequence of positive numbers $k_n\geq n$ such that
\begin{eqnarray*}
\limsup _{n\rightarrow\infty}|P_n(z)|^{1/k_n}<1
\end{eqnarray*}
for all $z\in \mathbb C$. Of course if the series (\ref{series}) converges then for any sequence $(k_n)$ that diverges to $\infty$ we have
\begin{equation}
\limsup _{n\rightarrow\infty}|P_n(z)|^{1/k_n}\leq 1\label{GeomtricRate}
\end{equation}
for all $z\in \mathbb C$. If we choose the sequence $(k_n)$ very large then it is mostly likely that the LHS of (\ref{GeomtricRate}) equals to $1$ so we
can not conclude that (\ref{series}) converges, while if we choose $(k_n)$ very small then we may not know whether the LHS of (\ref{GeomtricRate}) is
bounded or not (in particular when we want to show that if $s(z)$ converges on a certain subset of $\mathbb C$ then it converges on $\mathbb C$). Hence
it is worthwhile to consider what conditions that any "generic" sequence $(k_n)$ must satisfy if it satisfies (\ref{GeomtricRate}). Here "generic" means
that the constant $C_0^*$ defined in Section 2 is not zero. (For the case $C_0^*=0$ we can not hope to have any conclusion about the sequence $(k_n)$,
because if $P_n(z)$ converges uniformly to $0$ then any bounded sequence $k_n$ will satisfy (\ref{GeomtricRate})). We state this question as

Question 1: Given a sequence of entire functions $(P_n)$. Assume that $(k_n)$ is a sequence of positive numbers such that (\ref{GeomtricRate}) is
satisfied. How can we say about $k_n$ in terms of zeros of $P_n$'s?

In \cite{mul-yav}, the authors gained interested results which combine the growth of a sequence of polynomials on a "small subset" of $\Bbb C$ with the
growth of itself on the whole plane. The "small subsets" as mentioned are non-thin. We recall that (see \cite{mul-yav}) a domain $G$, with $\partial G$
having positive capacity, is non-thin at an its boundary point $\zeta \in
\partial G$ (or $\zeta$ is a regular point of $G$) if and only if
\begin{eqnarray*}
\lim _{z\in G, z\rightarrow \zeta}g(z,w)=0
\end{eqnarray*}
for all $w\in G$ where $g(.,.)$ is the Green function of $G$.
One of the main results in \cite{mul-yav} is stated as below (see Lemma 2 in \cite{mul-yav})
\begin{proposition}
Let $(d_n)$ be a sequence of positive numbers and let $(P_n)$ be a sequence of polynomials satisfying $deg(P_n)\leq d_n$. If $E\subseteq \Bbb C$ is
closed and non-thin at $\infty$ so that
$$\limsup _{n\rightarrow\infty}|P_n(z)|^{1/d_n}\leq 1,~\mbox{ for all }z\in E,$$
then
$$\limsup _{n\rightarrow\infty}||P_n||_R^{1/d_n}\leq 1,~\mbox{ for all }R>0,$$
where $||P_n||_R=\sup \{|P_n(z)|:~|z|\leq R \} .$
\label{lemmulyav}\end{proposition}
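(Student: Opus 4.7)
The plan is to translate the statement into the language of subharmonic functions of logarithmic growth and exploit non-thinness at infinity via the Siciak extremal function of $E$.

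Setting $u_n(z):=\tfrac{1}{d_n}\log|P_n(z)|$, each $u_n$ is subharmonic on $\mathbb C$ and belongs to the Lelong class
\[
\mathcal L:=\{u\ \text{subharmonic on}\ \mathbb C:\ u(z)\le\log|z|+O(1)\ \text{at}\ \infty\},
\]
because $\deg P_n\le d_n$. The hypothesis reads $\limsup_n u_n(z)\le 0$ on $E$, while the conclusion is the uniform bound $\limsup_n\sup_{|z|\le R}u_n(z)\le 0$ for every $R>0$.

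First I would establish that $(u_n)$ is locally uniformly bounded above. Write $c_n:=\limsup_{|z|\to\infty}(u_n(z)-\log|z|)$, the normalized leading-coefficient data of $P_n$. A Jensen / subharmonic-mean-value argument on a disk $\{|z|\le r\}$ chosen so that $E$ meets the circle $\{|z|=r\}$ in a set of positive arc-capacity extracts, from the pointwise hypothesis on $E$, a uniform upper bound $c_n\le C$. This forces $u_n(z)\le\log^+|z|+C$ on all of $\mathbb C$.

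With local boundedness in hand, put $u:=\limsup_n u_n$ and let $u^*$ be its upper-semicontinuous regularization. Brelot's theorem gives $u^*\in\mathcal L$ and $u=u^*$ off a polar set, so $u^*\le 0$ quasi-everywhere on $E$. By the defining property of the Siciak extremal function
\[
V_E(z):=\sup\{v(z):\ v\in\mathcal L,\ v\le 0\ \text{on}\ E\}
\]
(and of its USC regularization $V_E^*$), one has $u^*\le V_E^*$ on $\mathbb C$. Non-thinness of $E$ at $\infty$ now enters decisively: for such $E$ the extremal function $V_E^*$ vanishes identically on $\mathbb C$ (equivalently, the Robin constant of the unbounded component of $\widehat{\mathbb C}\setminus E$ at $\infty$ has the critical value). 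Therefore $u^*\le 0$ on $\mathbb C$, and a Hartogs-type lemma for subharmonic functions upgrades this pointwise bound to the required uniform conclusion $\limsup_n\sup_{|z|\le R}u_n(z)\le 0$.

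The main obstacle is the initial uniform control on the coefficient normalizations $c_n$: extracting such a bound from a pointwise $\limsup$ hypothesis on $E$ requires combining a subharmonic mean-value inequality with a capacity estimate for $E$, and one must handle the polar exceptional set produced by Egoroff-type approximation so that it does not interfere with the mean-value step. This is the first place where the richness of $E$ at infinity is used, and the passage from $u^*\le V_E^*$ to $V_E^*\equiv 0$ is the second, more conceptual, use of the non-thinness hypothesis.
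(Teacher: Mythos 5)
You should first note that the paper offers no proof of Proposition \ref{lemmulyav}: it is quoted verbatim as Lemma 2 of \cite{mul-yav}, so your proposal can only be measured against the argument of that reference, which in fact follows the same potential-theoretic route you sketch (pass to $u_n=\frac{1}{d_n}\log|P_n|$, regularize $\limsup_n u_n$, and use non-thinness at $\infty$ as a Phragm\'en--Lindel\"of hypothesis). The architecture --- Lelong class, Brelot--Cartan regularization, comparison with the extremal function, Hartogs' lemma at the end --- is sound and is the standard one.

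Two steps, however, are not carried by the mechanisms you name. First, the locally uniform upper bound on $(u_n)$ cannot be extracted by a Jensen/mean-value computation on a circle: the hypothesis is a pointwise $\limsup$ with no uniformity in $z\in E$, and an upper mean-value inequality cannot bound an integral from knowledge of the integrand on a subset of the circle. What works is a Baire-type step: write $E=\bigcup_N F_N$ with $F_N=\{z\in E:\,|P_n(z)|\le e^{d_n}\ \text{for all}\ n\ge N\}$ closed; since $E$ is non-polar (non-thinness at $\infty$ rules out polarity), some $F_N$ contains a compact non-polar set $K$, and the Bernstein--Walsh inequality $u_n\le 1+V_K^*$ for $n\ge N$ then gives the local bound. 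You allude to ``Egoroff'' only at the end, as an unresolved obstacle rather than a completed step. Second, and more seriously, the decisive claim that non-thinness of $E$ at $\infty$ forces $V_E^*\equiv 0$ is exactly where the entire content of the proposition lives, and you assert it without proof; the parenthetical justification via ``the Robin constant of the unbounded component has the critical value'' is not meaningful for an unbounded $E$, which has no Robin constant. The genuine reason is a generalized maximum principle on each unbounded component $\Omega$ of $\mathbb C\setminus E$: the two-constants theorem gives $v(z)\le(\log R+C)\,\omega\bigl(z,\Omega\cap\{|w|=R\},\Omega\cap\{|w|<R\}\bigr)$, and one must show that non-thinness at $\infty$, through Wiener's criterion, forces $\liminf_{R\to\infty}(\log R)\,\omega(\cdot)=0$. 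That harmonic-measure estimate is precisely what \cite{mul-yav} prove; until you either prove it or cite it, what you have is a correct reduction of the proposition to an equivalent statement rather than a proof.
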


Saying roughly, Proposition \ref{lemmulyav} states that if $E$ is a set being non-thin at infinity and $(P_n)$ is a sequence of polynomials having a
geometric rate growth at each point in $E$ then $(P_n)$ has a geometric rate growth in $\Bbb C$ also. Here by geometric rate growth we refer to one
property that the sequence of polynomials in Proposition \ref{lemmulyav} satisfies: if moreover we have that $(d_n)$ is an increasing sequence of integer
numbers, and
\begin{eqnarray*}
\limsup _{n\rightarrow\infty}||P_n||^{1/d_n}_K<1
\end{eqnarray*}
for any compact set $K\subset \Bbb C$, then the summation $\sum_{n=1}^{\infty} P_n(z)$ converges with locally geometric rate. Hence for the sake of
abbreviation, we use the term geometric rate growth to call any sequence of entire functions $(P_n)$ satisfying the condition $\limsup
_{R\rightarrow\infty}||P_n||_{R}^{1/d_n}\leq 1$ for some sequence $(d_n>0)$ and for any $R>0$. (We can see easily why the upper bound constant $1$ is
important here, see \cite{wal} for detail.)

There are two interesting questions rising from this result

Question 2: Does the conclusion of Proposition \ref{lemmulyav} still hold if $P_n$ are non-polynomial entire functions?

Question 3: If $(P_n)$ has a non-geometric rate growth in $E$, i.e., if instead of the condition
\begin{eqnarray*}
\limsup _{n\rightarrow\infty}|P_n(z)|^{1/d_n}\leq 1,~\mbox{for all }z\in E,
\end{eqnarray*}
we requires only that
\begin{eqnarray*}
\limsup _{n\rightarrow\infty}|P_n(z)|^{1/d_n}\leq h(|z|),~\mbox{for all }z\in E,
\end{eqnarray*}
where $h$ is not necessary a bounded function, does $(P_n)$ remain the same rate of growth in $\Bbb C$?
\begin{remark}
The sequences $(P_n)$ whose growth is non-geometric arises naturally in practice. For a simple example, we can consider the case when $P_n(z)=z^n$ and
$d_n=n$. This sequence does not have the geometric rate growth, however $|P_n(z)|^{1/d_n}$ grows like $|z|$.
\end{remark}

In this paper we give some partial answers to these three questions when the sequence $(P_n)$ is of bounded orders: that is $\rho _n\leq \rho$ for all
$n\in \mathbb N$ where $\rho _n$ is the order of $P_n$, and $\rho >0$ is a constant. We obtained the more satisfying results for the case $(P_n)$ is a
sequence of entire function of genus zero (see Sections 2, 3, 4, and 5), and less satisfying results for the general case when $P_n$ are of bounded
orders (see Section 6).

For Question 1 we obtained a nearly optimal answer (see Theorem \ref{theo5}).

For Question 3, the answer is confirmation in the case $P_n(z)$ is of order zero, $h(z)$ is a polynomial and $E$ is a closed set satisfying
\begin{eqnarray*}
\limsup _{R\rightarrow\infty}\frac{\log cap(E_R)}{\log R}=\beta >0,
\end{eqnarray*}
where
$$E_{R}=E\cap \{z:|z|\leq R\}.$$
(See Theorem \ref{theo2}). As a corollary we immediately get that $E$ must be non-thin at infinity. This result was proved in \cite{mul-yav} using
Wiener's criterion.

The main tools that we used to get our results are: the degree of entire functions of finite orders; an equivalent relation between two different kinds
of growth, that is the modulus growth and the logarithmic-integration growth, of a sequence of entire functions of genus zero (see Lemmas \ref{lem2} and
\ref{lem3}); the Weierstrass inequality (see Lemma \ref{lem5}); and the integral representation of the Green's function (see Theorem \ref{theo2}).

Exploring the growth of a sequence of entire functions of bounded orders naturally lead us to consider an extremal function which is analogous to the
Siciak's extremal function (See Section 6). In \cite{bos-brudnyi-levenberg-totik} the authors also defined a similar extremal function for other classes
of functions. They also define a "degree" for a continuous functions $P(z)$ but based on a filtration of the space of continuous functions rather than
based on the zeros of the function $P(z)$ as our treatment here. Their degree is always an integer while our degree maybe any positive number.

In a recent paper of the second author, some of the results in this paper was extended to the case of entire functions of several complex variables (see
\cite{tuyen}).

This paper consists of six sections. In Sections 2 and 3 we consider the growth of a sequence of entire functions of genus zero. In Sections 4 and 5 we
consider some consequences, refinements, and examples (including grouped power series, grouped Fourier series, and Fourier transform). In Section 6 we
discuss the growth of a sequence of entire functions of bounded orders, and define an extremal function that is naturally related to the problem.
\section{Notations and Lemmas}

For an entire function $f$ we use notations
\begin{eqnarray*}
||f||_R&=&\sup _{\{|z|\leq R\}}|f(z)|\\
C(f,R)&=&\exp \{\frac{1}{2\pi}\int _{0}^{2\pi}\log|f(Re^{it})| \},\\
\eta (f,R)&=&\mbox{ the number of elements of } \{z:0<|z|\leq R,~f(z)=0\}.
\end{eqnarray*}

Hereafter (in particular, in Sections 2, 3 and 4), unless specified otherwise, we always consider a sequence of positive numbers $(k_n)$ and a sequence
of entire functions of genus zero $(P_n)$ having the following form
\begin{equation}
P_n(z)=a_nz^{\alpha _n}\prod _{j}(1-z/z_{n,j}).\label{RepresentationFormula}
\end{equation}

We put
\begin{eqnarray*}
C_0&=&\limsup_{n\rightarrow\infty}C(P_n,1)^{1/k_n},\\
C_0^*&=&\liminf_{n\rightarrow\infty}C(P_n,1)^{1/k_n},\\
\eta (R)&=&\limsup _{{n\rightarrow\infty}}\frac{\eta (P_n,R)}{k_n}.
\end{eqnarray*}

For definition of capacity of a compact set, its properties and its relations to the Green's function and the harmonic measure of the set, one can refer to
\cite{fuc}.

For convenience of the reader we recall here Weierstrass's inequality (see Lemma 15.8 in \cite{rud})
\begin{lemma} For $p\in \mathbb N =\{1,2,\ldots \}$ define
\begin{eqnarray*}
G(z,p)=(1-z)\exp \{z+\frac{z^2}{2}+\ldots +\frac{z^p}{p}\}.
\end{eqnarray*}
(By convenience we define $G(z,0)=1-z$). Let $z\in \mathbb C$.

1. If $|z|\leq 1$ then
\begin{eqnarray*}
|G(z,p)|\leq |\exp \{\frac{z^{p+1}}{p+1}\}|(1+O(|z^{p+2}|)).
\end{eqnarray*}

2. If $z$ is small enough then
\begin{eqnarray*}
|G(z,p)|\sim |\exp \{\frac{z^{p+1}}{p+1}\}|(1+O(|z^{p+2}|)).
\end{eqnarray*}

3. If $|z|\geq 1$ then
\begin{eqnarray*}
|G(z,p)|\leq \exp \{\lambda _p|z|^p\}\leq \exp \{\lambda _p|z|^{p+1}\}
\end{eqnarray*}
where
\begin{eqnarray*}
\lambda _p=1+1+\frac{1}{2}+\frac{1}{3}+\ldots +\frac{1}{p}.
\end{eqnarray*}
Here big-oh $O(z)$ as usually means that there is a constant $C>0$ such that $O(z)\leq C|z|$ for $z$ small enough. \label{lem5}\end{lemma}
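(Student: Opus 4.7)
I plan to derive parts 1 and 2 from a single power-series identity and to treat part 3 by a direct modulus estimate.

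The starting point for parts 1 and 2 is the Taylor expansion $\log(1-z) = -\sum_{k=1}^{\infty} z^{k}/k$, valid on $|z|<1$. Substituting this into $\log G(z,p) = \log(1-z) + z + z^{2}/2 + \cdots + z^{p}/p$ and cancelling the truncated polynomial gives the clean identity
\[
G(z,p) \;=\; \exp\Bigl\{-\sum_{k=p+1}^{\infty}\frac{z^{k}}{k}\Bigr\}, \qquad |z|<1.
\]
I would then split off the leading term, writing $G(z,p) = \exp\{-z^{p+1}/(p+1)\}\cdot\exp\{-R(z)\}$ with $R(z) = \sum_{k\ge p+2} z^{k}/k$. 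A geometric-series bound gives $|R(z)| \le |z|^{p+2}/\bigl((p+2)(1-|z|)\bigr)$, which is $O(|z|^{p+2})$ for $z$ in any closed disc strictly inside the unit disc, and in particular for $z$ near zero. Taking moduli and using $|e^{w}| = e^{\Re w}$ together with $|\Re w| \le |w|$ then yields the upper bound stated in part 1, while the matching lower bound $|e^{w}|\ge e^{-|w|}$ applied to the same factorization produces the asymptotic equivalence of part 2.

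For part 3 ($|z|\ge 1$) I would work directly from the definition of $G(z,p)$. The logarithm of the modulus satisfies
\[
\log|G(z,p)| \;\le\; \log|1-z| \,+\, \sum_{k=1}^{p}\frac{|z|^{k}}{k}.
\]
For $|z|\ge 1$ and $p\ge 1$ one has $\log|1-z| \le \log(1+|z|) \le |z| \le |z|^{p}$, and each $|z|^{k}$ with $k\le p$ is at most $|z|^{p}$; adding these estimates gives $\log|G(z,p)| \le |z|^{p}\bigl(1 + 1 + \tfrac12 + \cdots + \tfrac1p\bigr) = \lambda_{p}|z|^{p}$. The second inequality in part 3 is then immediate on replacing the exponent $p$ by $p+1$, which is legitimate since $|z|\ge 1$.

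The only mild subtlety is that the implicit constant in the $O(|z|^{p+2})$ estimate for $R(z)$ is not uniform as $|z|\uparrow 1$, because of the factor $1/(1-|z|)$. This is not a real obstacle: the paper's explicit convention that the big-oh is applied only in the small-$z$ regime means that the estimate is always invoked on a disc of radius strictly less than one, which is exactly the setting in which it is used in the later sections.
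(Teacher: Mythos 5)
Your treatment of part 3 is exactly the ``easy'' computation the paper leaves to the reader, and it is correct: for $|z|\ge 1$ one has $\log|1-z|\le\log(1+|z|)\le|z|\le|z|^{p}$, which accounts for the extra leading $1$ in $\lambda_{p}$, and the truncated exponential contributes at most $|z|^{p}(1+\tfrac12+\cdots+\tfrac1p)$. For parts 1 and 2 you take a genuinely different route: the paper simply applies Lemma 15.8 of \cite{rud} to $G(z,p+1)$, which gives $|1-G(z,p+1)|\le|z|^{p+2}$ \emph{uniformly on the closed disc} $|z|\le1$, and then uses the identity $G(z,p)=G(z,p+1)\exp\{-z^{p+1}/(p+1)\}$; you instead reprove the content of Rudin's lemma from the series $\log(1-z)=-\sum_{k\ge1}z^{k}/k$. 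Your version is self-contained but buys strictly less: the identity $G(z,p)=\exp\{-\sum_{k\ge p+1}z^{k}/k\}$ and the tail bound $|R(z)|\le|z|^{p+2}/((p+2)(1-|z|))$ live only on the open disc (at $z=1$ the product vanishes and the series degenerates), so, as you concede, part 1 as literally stated for all $|z|\le1$ is not obtained. That is harmless for the way the lemma is used later (always with $z/z_{n,j}$ small), but the closed-disc statement requires Rudin's coefficient argument (the Taylor coefficients of $1-G(\cdot,p+1)$ are nonnegative), not the geometric tail.

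A second point you should not pass over silently: your factorization produces the factor $|\exp\{-z^{p+1}/(p+1)\}|$, whereas the lemma as printed has $|\exp\{+z^{p+1}/(p+1)\}|$, and these are not interchangeable since $|e^{\pm w}|=e^{\pm\Re w}$. The plus-sign inequality is in fact false for small $z$ with $\Re(z^{p+1})<0$: there $|G(z,p)|=1+|z|^{p+1}/(p+1)+O(|z|^{p+2})$ while the claimed right-hand side is $1-|z|^{p+1}/(p+1)+O(|z|^{p+2})$ (for $p=0$ and $z=-t$ this is just $1+t>e^{-t}(1+O(t^{2}))$). What your argument proves --- and what the paper's route via Rudin also proves --- is the minus-sign version, which is moreover the version consistent with the expansion of $\log(1-z)$ used in the proof of Theorem \ref{theo5}. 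So the step ``taking moduli \dots yields the upper bound stated in part 1'' quietly flips a sign; you should say explicitly that you are establishing the corrected form of the estimate rather than the one printed.
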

\begin{proof}
The proof of 3 is easy. For proof of 1 and 2 we apply Lemma 15.8 in \cite{rud} to the function $G(z,p+1)$.
\end{proof}

\begin{lemma} Let us assume that $k_n\geq d^*(P_n)$.

(i) If  $C_0=0$ then for all $R>0$
$$\limsup _{n\rightarrow\infty}||P_n||^{1/k_n}_{R}=0.$$

(ii) Assume that $C_0^*>0$ and
$$\limsup _{n\rightarrow\infty}C(P_n,R)^{1/k_n}\leq h(R),$$
where $h$ satisfies
$$\liminf _{R\rightarrow\infty}\frac{\log h(R)}{\log R}\leq \tau.$$
Then for all $R>0$ we have $\eta (R)\leq \tau.$
\label{lem2}\end{lemma}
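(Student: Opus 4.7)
My plan is to derive both parts from Jensen's formula applied to the genus zero representation (\ref{RepresentationFormula}), in the explicit form
\begin{eqnarray*}
\log C(P_n,R) = \log|a_n| + \alpha_n\log R + \sum_{0 < |z_{n,j}| \leq R}\log(R/|z_{n,j}|),
\end{eqnarray*}
together with the observation that, in the genus zero case, the inequality $d^*(P_n)\leq k_n$ packages three useful bounds at once: $\alpha_n\leq k_n$, $\eta(P_n,1)\leq k_n$, and $\sum_{|z_{n,j}|>1}|z_{n,j}|^{-1}\leq k_n$.

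For part (i), I would bound $|P_n(z)|$ for $|z|\leq R$ (with $R\geq 1$; the case $R<1$ follows from $||P_n||_R\leq ||P_n||_1$) by splitting the product in (\ref{RepresentationFormula}) at $|z_{n,j}|=1$. The outer factors are controlled by $|1-z/z_{n,j}|\leq \exp(R/|z_{n,j}|)$, whose full product is at most $\exp(Rk_n)$. The inner factors use $|1-z/z_{n,j}|\leq 2R/|z_{n,j}|$; the awkward factor $\prod_{|z_{n,j}|\leq 1}|z_{n,j}|^{-1}$ is then exactly cancelled by the Jensen identity at $R=1$, which rewrites $|a_n|=C(P_n,1)\prod_{0<|z_{n,j}|<1}|z_{n,j}|$. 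Collecting everything leads to an estimate
\begin{eqnarray*}
||P_n||_R\leq C(P_n,1)(2R)^{\eta(P_n,1)}R^{\alpha_n}e^{Rk_n},
\end{eqnarray*}
and raising to the $1/k_n$ power, together with $\eta(P_n,1)/k_n\leq 1$ and $\alpha_n/k_n\leq 1$, yields $||P_n||_R^{1/k_n}\leq 2R^{2}e^{R}C(P_n,1)^{1/k_n}$, whose $\limsup$ is $0$ when $C_0=0$.

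For part (ii), I would apply the Jensen identity at two different radii: for $S>R\geq 1$,
\begin{eqnarray*}
\log C(P_n,S)-\log C(P_n,R) = (\alpha_n+\eta(P_n,R))\log(S/R) + \sum_{R<|z_{n,j}|\leq S}\log(S/|z_{n,j}|) \geq \eta(P_n,R)\log(S/R).
\end{eqnarray*}
Since $\log C(P_n,\cdot)$ is nondecreasing, $\log C(P_n,R)\geq \log C(P_n,1)$, so after dividing by $k_n\log(S/R)$ I obtain
\begin{eqnarray*}
\eta(P_n,R)/k_n\leq (\log C(P_n,S)^{1/k_n}-\log C(P_n,1)^{1/k_n})/\log(S/R).
\end{eqnarray*}
Taking $\limsup_n$ and using continuity of $\log$ together with $\limsup(A_n-B_n)\leq \limsup A_n-\liminf B_n$ gives $\eta(R)\leq (\log h(S)-\log C_0^*)/\log(S/R)$ for every $S>R$; letting $S\to\infty$ along a subsequence on which $\log h(S)/\log S\to\tau$ then yields $\eta(R)\leq\tau$ for $R\geq 1$, and the case $R<1$ is immediate from monotonicity of $\eta(P_n,\cdot)$.

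I expect the main delicate step to be the $\limsup$/$\liminf$ bookkeeping in part (ii), where one must exploit that $\log C_0^*$ is finite --- hence the hypothesis $C_0^*>0$ --- and then extract the correct limit along the $\liminf$-realising sequence of radii. The most critical small observation in part (i) is that the Jensen relation for $|a_n|$ is precisely what is needed to neutralise the $\prod|z_{n,j}|^{-1}$ coming from the small zeros; without it one has no control over arbitrarily many zeros clustering near the origin, and this is also where the genus zero hypothesis enters in an essential way.
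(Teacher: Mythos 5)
Your proof is correct: the Jensen-formula identity for $\log C(P_n,R)$, the cancellation of $\prod_{0<|z_{n,j}|\leq 1}|z_{n,j}|^{-1}$ against $|a_n|$ via Jensen at $R=1$, the two-radius comparison giving $\eta(P_n,R)\log(S/R)\leq \log C(P_n,S)-\log C(P_n,1)$, and the $\limsup/\liminf$ bookkeeping (where $C_0^*>0$ keeps $\log C_0^*$ out of $-\infty$) are all sound, and the three bounds you extract from $k_n\geq d^*(P_n)$ are exactly what the genus-zero degree provides. The paper itself gives no proof here — it only cites Lemma 1 of \cite{trong-tuyen2} — but your argument is the standard route for such statements and constitutes a complete, self-contained proof.
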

\begin{proof}
See Lemma 1 in \cite{trong-tuyen2}.
\end{proof}
 We end this section with a result relating the maximum and logarithm norms
\begin{lemma} Let us assume that $k_n\geq d^*(P_n)$, $C_0<\infty$, and that
\begin{equation}
\lim_{R\rightarrow\infty}\limsup_{n\rightarrow\infty}\frac{\left |\sum_{|z_{n,j}|\geq R}\frac{1}{z_{n,j}}\right |}{k_n}=0,\label{lem3.1}
\end{equation}
and there exists a sequence $(R_n)$ of positive real numbers tending to $\infty$ such that
\begin{equation}
\limsup_{n\rightarrow\infty}\frac{\eta (P_n,R_n)}{k_n}<\infty.\label{lem3.2}
\end{equation}

If
$$\liminf_{R\rightarrow\infty}\limsup_{n\rightarrow\infty}\frac{\log C(P_n,R)^{1/k_n}}{\log R}\leq \tau,$$
then for all $R>0$ we have
$$\limsup_{n\rightarrow\infty}||P_n||_{R}^{1/k_n}\leq C_0(1+R)^{\tau}.$$
\label{lem3}\end{lemma}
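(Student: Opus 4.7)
The plan is to compare $\log|P_n(z)|$ on $|z|\le R$ with $\log C(P_n,R^*)$ at a slightly larger radius $R^*>R$, using the Weierstrass factorization and Jensen's formula, and then convert this into the stated bound via a convexity argument that extracts, from the $\liminf$ hypothesis, a \emph{uniform} upper bound on $\sigma(R):=\limsup_n\log C(P_n,R)/k_n$.

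First I would write, for $|z|\le R< R^*$,
\begin{align*}
\log|P_n(z)|-\log C(P_n,R^*)
 =\alpha_n\log\!\tfrac{|z|}{R^*}
 &+\sum_{|z_{n,j}|\le R^*}\!\bigl(\log|1-z/z_{n,j}|-\log(R^*/|z_{n,j}|)\bigr) \\
 &+\sum_{|z_{n,j}|>R^*}\log|1-z/z_{n,j}|,
\end{align*}
using Jensen's identity $\log C(P_n,R^*)=\log|a_n|+\alpha_n\log R^*+\sum_{|z_{n,j}|\le R^*}\log(R^*/|z_{n,j}|)$. The first term is $\le 0$; each summand in the middle sum equals $\log|z-z_{n,j}|-\log R^*\le\log(1+R/R^*)$, so the middle sum is $\le\eta(P_n,R^*)\log(1+R/R^*)$. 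For the tail, Lemma \ref{lem5}(2) with $p=0$ gives $\log|1-z/z_{n,j}|\le\operatorname{Re}(z/z_{n,j})+c|z/z_{n,j}|^2$ whenever $|z_{n,j}|>R^*\ge R\ge|z|$; summing and using $\sum_{|z_{n,j}|>R^*}1/|z_{n,j}|^2\le(1/R^*)\sum 1/|z_{n,j}|\le d^*(P_n)/R^*\le k_n/R^*$ yields
\[
\sum_{|z_{n,j}|>R^*}\log|1-z/z_{n,j}|\le R\Bigl|\sum_{|z_{n,j}|>R^*}1/z_{n,j}\Bigr|+cR^2 k_n/R^*.
\]

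Dividing by $k_n$, taking $\limsup_n$, and invoking hypothesis (\ref{lem3.2}) (together with monotonicity of $\eta(P_n,\cdot)$, which gives a uniform bound $\limsup_n\eta(P_n,R^*)/k_n\le M$ independent of $R^*$) and hypothesis (\ref{lem3.1}) (so that $\delta(R^*):=\limsup_n\bigl|\sum_{|z_{n,j}|>R^*}1/z_{n,j}\bigr|/k_n\to 0$ as $R^*\to\infty$), I obtain
\[
\limsup_n\frac{\log\|P_n\|_R}{k_n}\le\sigma(R^*)+M\log(1+R/R^*)+R\,\delta(R^*)+cR^2/R^*.
\]
The crucial observation is now that $\log C(P_n,R)$ is the circular average of the subharmonic function $\log|P_n|$, hence is convex and non-decreasing in $\log R$; the pointwise $\limsup$, $\sigma$, is therefore convex and non-decreasing in $\log R$ as well, with $\sigma(1)=\log C_0$. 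For such a function, the asymptotic slope $\sigma'(\infty)$ exists and equals $\lim_{R\to\infty}\sigma(R)/\log R$; the $\liminf$ hypothesis forces $\sigma'(\infty)\le\tau$, and then convexity (non-decreasing slope) forces $\sigma'(s)\le\tau$ at every $s=\log R\ge 0$. Integrating yields the uniform estimate $\sigma(R)\le\log C_0+\tau\log R$ for every $R\ge 1$.

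Combining the two estimates gives
\[
\limsup_n\frac{\log\|P_n\|_R}{k_n}\le\log C_0+\tau\log R^*+M\log(1+R/R^*)+R\,\delta(R^*)+cR^2/R^*,
\]
valid for every $R^*>R$. The main obstacle is the final step: one must choose $R^*$ so that the three error terms vanish while the main contribution $\tau\log R^*$ is replaced by $\tau\log(1+R)$; naively sending $R^*\to\infty$ makes the error terms vanish but blows up $\tau\log R^*$, so a balance is required. The intended resolution is to pick $R^*$ along the sequence realizing the $\liminf$ (so that the bound $\sigma(R^*)\le(\tau+\varepsilon)\log R^*$ is tight, not just the global convex bound) and then use the fact that $\sigma$ is convex to replace $\tau\log R^*$ by $\tau\log(1+R)+o(1)$ as $\varepsilon\downarrow 0$ and $R^*$ is pushed to the nearest good value; one then lets $\varepsilon\downarrow 0$ to conclude $\limsup_n\|P_n\|_R^{1/k_n}\le C_0(1+R)^\tau$.
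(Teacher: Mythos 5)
Your preparatory steps are sound: the Jensen identity for $\log C(P_n,R^*)$, the per-zero bound $\log|z-z_{n,j}|-\log R^*\le\log(1+R/R^*)$ for zeros in $|w|\le R^*$, the tail estimate from the Taylor expansion of $\log(1-w)$ together with $\sum_{|z_{n,j}|>R^*}|z_{n,j}|^{-2}\le d^*(P_n)/R^*\le k_n/R^*$, the uniform bound $M$ extracted from (\ref{lem3.2}) via monotonicity of $\eta(P_n,\cdot)$, and the convexity argument upgrading the $\liminf$ hypothesis to $\sigma(R)\le\log C_0+\tau\log R$ for all $R\ge1$ are all correct. (For comparison: the paper does not prove this lemma at all; it cites Lemma 2 of \cite{trong-tuyen2}, so there is no in-paper argument to match yours against.)

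The final step is a genuine gap, and you have in effect conceded it. Your master inequality
\begin{equation*}
\limsup_{n}\frac{\log\|P_n\|_R}{k_n}\le\log C_0+\tau\log R^*+M\log(1+R/R^*)+R\,\delta(R^*)+cR^2/R^*
\end{equation*}
cannot be closed by any choice of $R^*$: the terms $R\,\delta(R^*)$ and $cR^2/R^*$ vanish only as $R^*\to\infty$ (and you have no rate for $\delta$), at which point $\tau\log R^*$ diverges; keeping $R^*$ comparable to $R$ leaves a non-vanishing $R\,\delta(R^*)$ and a residual factor $(1+R/R^*)^M$. The proposed fix --- take $R^*$ on the sequence realizing the $\liminf$ and ``use convexity to replace $\tau\log R^*$ by $\tau\log(1+R)+o(1)$'' --- is not an argument: convexity has already been spent in producing $\sigma(R^*)\le\log C_0+\tau\log R^*$, and no further convexity statement transports an estimate at radius $R^*\gg R$ back down to radius $R$ except through the very comparison whose error terms are the obstruction. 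The structural reason the method cannot reach the constant $1$ is that the bound $\log|z-z_{n,j}|\le\log(R+R^*)$ charges each of the $\approx Mk_n$ zeros in $|w|\le R^*$ a fixed positive amount, whereas the sharp conclusion requires that only a mass $\tau k_n$ of zeros (the amount forced by the growth of the circular means) contribute $\log(1+R)$ each. A route that does work: split $P_n=Q_nH_n$ at the radii $R_n$ of (\ref{lem3.2}), use (\ref{lem3.1}) and the quadratic tail bound to get $|H_n(z)|^{1/k_n}\to1$ locally uniformly, so that one is reduced to polynomials $Q_n$ of degree $O(k_n)$ with the same circular means; then pass to an $L^1_{loc}$ limit $u$ of the subharmonic functions $k_n^{-1}\log|Q_n|$. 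The bound $\sigma(R)\le\log C_0+\tau\log R$ forces the Riesz mass of $u$ to satisfy $\mu(\mathbb C)\le\tau$, and writing $u(z)=\int\log|z-w|\,d\mu(w)+c$ one gets $\sup_{|z|\le R}u\le \mathrm{mean}(u,1)+\int[\log(R+|w|)-\log\max(1,|w|)]\,d\mu\le\log C_0+\tau\log(1+R)$, which together with Hartogs' lemma yields the statement. It is this mass bound on the limit measure, absent from your argument, that produces the exact constant.
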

\begin{proof}
See Lemma 2 in \cite{trong-tuyen2}.
\end{proof}
\section{The growth of sequences of entire functions of genus zero}
In this Section we still use the convenience in Section 2, that is $(P_n)$ is a sequence of entire functions of genus zero with representation
(\ref{RepresentationFormula}), and $(k_n)$ is a sequence of positive numbers.

Theorems \ref{theo1} and \ref{theo2} answer to Questions 2 and 3. Their proofs were given in \cite{trong-tuyen2}. In Theorem \ref{theo5} we answer
Question 3, whose results justify the conditions imposed in Theorems \ref{theo1} and \ref{theo2}. In Theorem \ref{theo6} we show how the conditions of
Theorems \ref{theo1} and \ref{theo2} can be improved in view of Theorem \ref{theo5}.
\begin{theorem}
Let $E$ be a closed set being non-thin at $\infty$. Let us assume that $k_n\geq d^*(P_n)$, and that
\begin{equation}
\lim_{R\rightarrow\infty}\limsup_{n\rightarrow\infty}\frac{|\sum_{|z_{n,j}|\geq R}1/z_{n,j}|}{k_n}=0,\label{theo1.1}
\end{equation}
and there exists a sequence $\{R_n\}$ of positive real numbers tending to $\infty$ such that
\begin{equation}
\limsup_{n\rightarrow\infty}\frac{\eta (P_n,R_n)}{k_n}<\infty.\label{theo1.2}
\end{equation}
If for each $z\in E$ one has
$$\limsup _{n\rightarrow\infty}|P_n(z)|^{1/k_n}\leq 1,$$
then
$$\limsup _{n\rightarrow\infty}||P_n||_R^{1/k_n}\leq 1,~\mbox{for all}~R>0.$$
\label{theo1}\end{theorem}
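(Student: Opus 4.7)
The strategy is to combine Lemmas \ref{lem2} and \ref{lem3} with a Bernstein--Walsh-type propagation argument that exploits the non-thinness of $E$ at infinity.

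First I would dispose of the trivial case. If $C_0 = 0$, then part (i) of Lemma \ref{lem2} immediately gives $\limsup_n \|P_n\|_R^{1/k_n} = 0$ for every $R>0$, so we are done. Hence assume $C_0 > 0$; by extracting a subsequence (which is harmless for the conclusion) I may further assume $C_0^* > 0$. The goal then reduces to verifying the hypotheses of Lemma \ref{lem3} with $\tau = 0$, namely (a) $C_0 \leq 1$, and (b) $\liminf_{R\to\infty} \limsup_n (\log R)^{-1} \log C(P_n, R)^{1/k_n} \leq 0$. Both (a) and (b) follow from the single estimate
$$\limsup_{n\to\infty} C(P_n, R)^{1/k_n} \leq h(R) \qquad (R>0),$$
with $\liminf_{R\to\infty} (\log h(R))/\log R \leq 0$; in fact I expect to obtain the stronger form $h \equiv 1$.

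To establish the estimate, set $u_n(z) := k_n^{-1} \log|P_n(z)|$, a subharmonic function on $\mathbb C$. The hypothesis on $E$ gives $\limsup_n u_n(z) \leq 0$ for every $z \in E$. Weierstrass's inequality (Lemma \ref{lem5}), together with $k_n \geq d^*(P_n)$ and the assumptions (\ref{theo1.1})--(\ref{theo1.2}), yields a uniform upper bound of the form
$$u_n(z) \leq A \log(1+|z|) + o(1) \qquad (|z|\to\infty,\text{ uniformly in }n\text{ large})$$
for some finite constant $A$, so the family $\{u_n\}$ is locally bounded above. A two-constants / Bernstein--Walsh argument on the unbounded component $\Omega$ of $\mathbb C \setminus E$, combined with the vanishing at infinity of the Green function $g_\Omega(\cdot,\infty)$ (which is precisely the non-thinness of $E$ at $\infty$), then promotes the bound $\limsup_n u_n \leq 0$ from $E$ to all of $\mathbb C$. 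Averaging over the circle $|z|=R$ gives the required bound on $C(P_n,R)^{1/k_n}$, and in particular $C_0 \leq 1$.

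With $C_0 \leq 1$ in hand and conditions (\ref{theo1.1})--(\ref{theo1.2}) providing the remaining hypotheses of Lemma \ref{lem3}, the lemma (applied with $\tau = 0$) delivers $\limsup_n \|P_n\|_R^{1/k_n} \leq C_0(1+R)^0 \leq 1$, completing the proof. The main obstacle is the Bernstein--Walsh step: one must use the non-thinness of $E$ at infinity, together with the a priori logarithmic growth estimate coming from $d^*(P_n) \leq k_n$ and Lemma \ref{lem5}, to promote a pointwise bound on $E$ to a global bound, since the classical polynomial version does not directly apply to entire functions of genus zero. The preparatory growth estimate that makes the two-constants theorem applicable is the delicate part; everything else is a matter of packaging the conclusion through Lemmas \ref{lem2} and \ref{lem3}.
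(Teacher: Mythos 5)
There is a genuine gap at the central step. Your outline correctly routes the conclusion through Lemmas \ref{lem2} and \ref{lem3} (reducing matters to showing $C_0\leq 1$ together with $\limsup_n C(P_n,R)^{1/k_n}\leq 1$ is sound), but the entire content of the theorem is concentrated in the step you describe as ``a two-constants / Bernstein--Walsh argument \dots promotes the bound $\limsup_n u_n\leq 0$ from $E$ to all of $\mathbb C$,'' and that step is not carried out. Two concrete problems. First, the hypothesis on $E$ is only a pointwise $\limsup$: for fixed $z\in E$ and $\epsilon>0$ the inequality $u_n(z)\leq\epsilon$ holds only for $n\geq N(z,\epsilon)$, with no uniformity, whereas a two-constants estimate on the unbounded component of $\mathbb C\setminus E$ requires a bound on a non-thin-at-$\infty$ portion of the boundary valid for each fixed large $n$. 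Converting the pointwise $\limsup$ into such a uniform bound without destroying non-thinness at $\infty$ is precisely the content of Proposition \ref{lemmulyav}, and it is nontrivial (it is proved in \cite{mul-yav} via Wiener's criterion); you neither invoke it nor supply a substitute. Second, the preparatory estimate $u_n(z)\leq A\log(1+|z|)+o(1)$ is not available a priori: from $k_n\geq d^*(P_n)$ and Lemma \ref{lem5} one only gets a bound of the shape $u_n(z)\leq k_n^{-1}\log C(P_n,1)+\log(1+|z|)+|z|\cdot k_n^{-1}\sum_{|z_{n,j}|>1}|z_{n,j}|^{-1}$, which is linear (not logarithmic) in $|z|$ before one splits off the far zeros using (\ref{theo1.1}); worse, the additive term $k_n^{-1}\log C(P_n,1)$ tends to $\log C_0$, and asserting it is $o(1)$ presupposes $C_0\leq 1$, which is among the things to be proved. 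So the plan is circular exactly where it is also incomplete.

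For comparison, the paper's own proof (deferred to \cite{trong-tuyen2}, but its pattern is visible in the proofs of Theorems \ref{theo5} and \ref{theo6}) avoids redoing the potential theory: one factors $P_n=Q_nH_n$, where $Q_n$ carries the zeros with $|z_{n,j}|\leq R_n$ and is therefore a polynomial of degree $O(k_n)$ by (\ref{theo1.2}), shows $|H_n(z)|^{1/k_n}\to 1$ locally uniformly using the Taylor expansion of $\log(1-z)$ --- condition (\ref{theo1.1}) controlling the first-order term and $k_n\geq d^*(P_n)$ the remainder --- and then applies the already-established polynomial case, Proposition \ref{lemmulyav}, to the $Q_n$. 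If you wish to keep your architecture, the honest repair is to insert exactly this factorization so that the propagation step is the quoted polynomial result rather than an unproved Bernstein--Walsh claim for genus-zero entire functions.
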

\begin{theorem}
Let $E$ be a closed set such that
\begin{equation}
\limsup _{R\rightarrow\infty}\frac{\log cap(E_R)}{\log R}=\beta >0.\label{theo2.1}
\end{equation}
Assume that (\ref{theo1.1}) and (\ref{theo1.2}) hold.  If for all $z\in E$ we have
$$\limsup _{n\rightarrow\infty}|P_n(z)|^{1/k_n}\leq h(|z|),$$
where
$$\limsup _{R\rightarrow\infty}\frac{\log h(R)}{\log R}\leq \gamma <\infty.$$
Then for all $R>0$ we have
$$\limsup _{n\rightarrow\infty}||P_n||^{1/k_n}_{R}\leq C_0(1+R)^{\gamma /\beta}.$$
\label{theo2}\end{theorem}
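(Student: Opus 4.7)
The plan is to reduce Theorem \ref{theo2} to Lemma \ref{lem3} applied with $\tau=\gamma/\beta$. Since the hypotheses (\ref{theo1.1}) and (\ref{theo1.2}) of Theorem \ref{theo2} are identical to (\ref{lem3.1}) and (\ref{lem3.2}) of Lemma \ref{lem3}, and the Section 2 convention $k_n\geq d^*(P_n)$ is in force, it suffices to verify that $C_0<\infty$ and
\[
\liminf_{R\to\infty}\limsup_{n\to\infty}\frac{\log C(P_n,R)^{1/k_n}}{\log R}\leq\frac{\gamma}{\beta}.
\]
Once this growth bound is in hand, Lemma \ref{lem3} yields $\limsup_n\|P_n\|_R^{1/k_n}\leq C_0(1+R)^{\gamma/\beta}$ for every $R>0$.

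To establish the growth bound I would transfer the pointwise hypothesis on $E$ into an integrated bound against the equilibrium measure of $E_{R'_m}$ for a well-chosen sequence $R'_m\to\infty$. Fix $\epsilon>0$; by (\ref{theo2.1}) select $R'_m\to\infty$ with $\log\mathrm{cap}(E_{R'_m})\geq(\beta-\epsilon)\log R'_m$, and let $\mu_{R'_m}$ denote the equilibrium measure of $E_{R'_m}$, whose Green function admits the integral representation
\[
g_{R'_m}(z,\infty)=\int\log|z-w|\,d\mu_{R'_m}(w)-\log\mathrm{cap}(E_{R'_m}).
\]
Since $\mathrm{supp}\,\mu_{R'_m}\subset E_{R'_m}\subset E$ and $u_n:=(1/k_n)\log|P_n|$ is subharmonic with the local upper envelope supplied by the Weierstrass estimate of Lemma \ref{lem5}, a Hartogs-type reverse-Fatou argument gives
\[
\limsup_{n\to\infty}\int u_n\,d\mu_{R'_m}\leq\int\log h(|z|)\,d\mu_{R'_m}\leq(\gamma+\epsilon)\log R'_m.
\]
I would then use the factorization of $P_n$ to express both $\log C(P_n,R'_m)$ (by Jensen's formula) and $\int\log|P_n|\,d\mu_{R'_m}$ (by the representation above) as sums over the zeros $z_{n,j}$, split at $|z_{n,j}|=R'_m$. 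After carrying out the bookkeeping and using $\log\mathrm{cap}(E_{R'_m})\geq(\beta-\epsilon)\log R'_m$ to invert the capacity, the target exponent $\gamma/\beta$ emerges upon letting $\epsilon\to 0$.

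The main obstacle will be the zero-correction bookkeeping: the discrepancy between $\log C(P_n,R'_m)$ and $\int\log|P_n|\,d\mu_{R'_m}$ is a sum whose individual summands can be as large as $\log(R'_m/\mathrm{cap}(E_{R'_m}))\sim(1-\beta)\log R'_m$, so that a naive bound yields only the weaker exponent $\gamma+1-\beta$. It is precisely conditions (\ref{theo1.1}) and (\ref{theo1.2}) that furnish the necessary cancellation: (\ref{theo1.1}) controls the tail contribution from zeros with $|z_{n,j}|>R'_m$ through the Weierstrass expansion in Lemma \ref{lem5}, while (\ref{theo1.2}) together with $d^*(P_n)\leq k_n$ bounds the inner sum. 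Finally, $C_0<\infty$ follows by running the same argument at $R=1$, using that $\mathrm{cap}(E_{R'_m})>0$ for some $R'_m$, which is guaranteed by (\ref{theo2.1}).
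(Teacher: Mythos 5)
First, a caveat: the paper itself does not prove Theorem \ref{theo2}; it defers the proof to \cite{trong-tuyen2}, so I can only compare your plan against the toolkit the paper announces (Lemmas \ref{lem2} and \ref{lem3}, the Weierstrass inequality of Lemma \ref{lem5}, and the integral representation of the Green's function). Your skeleton is consistent with that toolkit: reducing to Lemma \ref{lem3} with $\tau=\gamma/\beta$ is the right move (and you may even skip the verification of $C_0<\infty$, since the conclusion is vacuous when $C_0=\infty$), and pairing $u_n=\frac{1}{k_n}\log|P_n|$ with the equilibrium measures of $E_{R'_m}$ is the right device for bringing the capacity into play.

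The genuine gap is in the step you yourself call ``the main obstacle,'' and your account of how it closes is not correct. (i) Conditions (\ref{theo1.1}) and (\ref{theo1.2}) do not supply cancellation in the discrepancy between $\log C(P_n,R'_m)$ and $\int\log|P_n|\,d\mu_{R'_m}$. Their role is to let you replace $P_n$ by the truncated product $Q_n$ over $|z_{n,j}|\le R_n$, exactly as in the proof of Theorem \ref{theo5}: (\ref{theo1.1}) with the Taylor expansion of $\log(1-z)$ kills the outer factor for each fixed compact set as $n\to\infty$, and (\ref{theo1.2}) bounds the zero count of $Q_n$ by $\kappa k_n$. Note that (\ref{theo1.1}) controls $\frac{1}{k_n}\bigl|\sum_{|z_{n,j}|>R}1/z_{n,j}\bigr|$ only by some $\epsilon(R)\to 0$, so on $E_{R'_m}$ the tail contributes $R'_m\,\epsilon(R'_m)$, which need not be $O(\log R'_m)$; the cut must therefore be made at $R_n$, not at $R'_m$ as your split suggests. (ii) More seriously, after truncation the zero-by-zero comparison you describe, using only $g_{E_{R'_m}}(z_{n,j},\infty)\ge 0$, loses $\log R'_m-\log \mathrm{cap}(E_{R'_m})\approx(1-\beta)\log R'_m$ for each of the $\sim\kappa k_n$ zeros with $|z_{n,j}|\le R'_m$, yielding only $\limsup_n\frac{1}{k_n}\log C(P_n,R'_m)\le(\gamma+\kappa(1-\beta))\log R'_m$ where $\kappa$ is the (unknown) limiting zero density --- and your stated fallback exponent $\gamma+1-\beta$ cannot be what a correct naive bound produces, since for small $\beta$ it is \emph{smaller} than $\gamma/\beta$. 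To extract $\gamma/\beta$ one needs a further idea that is absent from the proposal: either feed the bound back into the zero density via Jensen's formula to obtain the self-referential inequality $\kappa\le\gamma+\kappa(1-\beta)$, hence $\kappa\beta\le\gamma$; or pass to the limit subharmonic function $u=\limsup^* u_n$ and use the reciprocity $\int u\,d\mu_{E_{R'}}=c_0+\kappa_0\log\mathrm{cap}(E_{R'})+\int g_{E_{R'}}\,d\nu$ for its Riesz mass $\nu$ and Robin-type constant $c_0$ (equivalently, apply the maximum principle to $u-\kappa_0 g_{E_{R'}}$ and evaluate at $\infty$). That comparison of Robin constants at infinity is where the ratio $\gamma/\beta$ actually comes from; the splitting at $|z_{n,j}|=R'_m$ alone does not reach it.
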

\begin{theorem}
Assume that $C_0^*>0$, and
\begin{eqnarray*}
\limsup _{n\rightarrow\infty}|P_n(z)|^{1/k_n}\leq 1
\end{eqnarray*}
for all $z\in\mathbb C$. We do not assume any more condition, in particular, we do not assume that $k_n\geq d^*(P_n)$.

Then we have the following two alternatives: Either

1) For any $m\in \mathbb N$
\begin{equation}
\lim _{R\rightarrow\infty}\limsup _{n\rightarrow\infty}\frac{1}{k_n}\sum _{|z_{n,j}|\geq R}\frac{1}{|z_{n,j}|^m}=\infty ,\label{theo5.1}
\end{equation}

or,

2) There exists $m\in \mathbb N$ such that
\begin{equation}
\lim _{R\rightarrow\infty}\limsup _{n\rightarrow\infty}\frac{1}{k_n}\sum _{|z_{n,j}|\geq R}\frac{1}{|z_{n,j}|^{m}}=0.\label{theo5.3}
\end{equation}
Moreover, if (\ref{theo5.3}) is the case, for any $l\in \mathbb N$
\begin{equation}
\lim _{R\rightarrow\infty}\limsup _{n\rightarrow\infty}\frac{1}{k_n}|\sum _{|z_{n,j}|\geq R}\frac{1}{z_{n,j}^l}|=0.\label{theo5.6}
\end{equation}

In particular, if $k_n\geq d^*(P_n)$ for all $n\in\mathbb N$ then alternative 2 holds, hence conditions (\ref{theo1.1}) and (\ref{theo1.2}) are
satisfied. \label{theo5}\end{theorem}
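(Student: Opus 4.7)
The plan is (1) to deduce the dichotomy between alternatives~1 and~2 as a purely formal consequence of the obvious monotonicity of the sums $\sigma_l(R)$; (2) to prove (\ref{theo5.6}) under alternative~2 via a Fourier-coefficient identity linking $\tau_l(n,r)$ to $\log|P_n|$ on the circle $|z|=r$; and (3) to read off the ``in particular'' clause from the definition of $d^*$ for genus zero.

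Set $\sigma_l(R):=\limsup_n k_n^{-1}\sum_{|z_{n,j}|\geq R}|z_{n,j}|^{-l}$. The elementary bound $|z_{n,j}|^{-l-1}\leq R^{-1}|z_{n,j}|^{-l}$ (valid when $|z_{n,j}|\geq R\geq 1$) gives $\sigma_{l+1}(R)\leq R^{-1}\sigma_l(R)$, so if $\sigma_l(\infty):=\lim_R\sigma_l(R)<\infty$ for some $l$ then $\sigma_{l+1}(\infty)=0$. This forces the dichotomy, and no hypothesis on $P_n$ enters. Under alternative~2 and $l\geq m$, the same estimate yields $|\tau_l(n,R)|\leq R^{m-l}\sum_{|z_{n,j}|\geq R}|z_{n,j}|^{-m}$, so (\ref{theo5.6}) is immediate; genuine cancellation is only needed for $l<m$.

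For $l<m$ the key tool is the identity
\[
\tau_l(n,r)=-\frac{2l}{r^l}\,c_l(n,r)-\frac{1}{r^{2l}}\sum_{|z_{n,j}|<r}\overline{z_{n,j}}^{\,l},\qquad c_l(n,r):=\frac{1}{2\pi}\int_0^{2\pi}e^{-il\theta}\log|P_n(re^{i\theta})|\,d\theta,
\]
obtained from Poisson--Jensen by expanding $\log|1-re^{i\theta}/z_{n,j}|$ in its Fourier series on each side of the circle $|z|=r$ and summing over $j$. The second term is at most $\eta(P_n,r)/r^l$, and $|c_l(n,r)|\leq 2\log^+\|P_n\|_r+|\log C(P_n,r)|$ (using $\int|\log|P_n||\leq 2\int\log^+|P_n|-\int\log|P_n|$). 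Thus (\ref{theo5.6}) follows upon sending $r\to\infty$, once one has the uniform-in-$n$ bounds $\eta(P_n,r)/k_n=O(1)$, $\log^+\|P_n\|_r/k_n\to 0$, and $\log C(P_n,r)/k_n\to 0$ at each fixed~$r$.

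The main obstacle is promoting the pointwise hypothesis $\limsup_n|P_n|^{1/k_n}\leq 1$ to these uniform bounds. Here alternative~2 enters crucially via the factorization
\[
P_n(z)=Q_n(z)\,\tilde\Phi_n(z)\,\exp\Bigl(-\sum_{l=1}^{m-1}\tfrac{z^l}{l}\,\tau_l(n,R)\Bigr),\quad \tilde\Phi_n(z)=\!\!\prod_{|z_{n,j}|\geq R}\!\!G(z/z_{n,j},m-1),
\]
where $Q_n$ absorbs the zeros with $|z_{n,j}|<R$: Lemma~\ref{lem5} and $\sigma_m(\infty)=0$ give $\|\tilde\Phi_n\|_r^{1/k_n}\to 1$, and combined with $C_0^*>0$ this makes $u_n=\log|P_n|/k_n$ locally uniformly bounded above. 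Hartogs' lemma then upgrades the pointwise $\limsup\leq 0$ to uniform control on compacta, reverse Fatou gives $\log C(P_n,r)/k_n\to 0$, and Jensen's formula controls $\eta(P_n,r)/k_n$. For the ``in particular'' clause, for genus zero one has $d^*(P_n)=\alpha_n+\eta(P_n,1)+\sum_{|z_{n,j}|>1}|z_{n,j}|^{-1}$, so $k_n\geq d^*(P_n)$ forces $\sigma_1(R)\leq 1$ for $R>1$ and hence $\sigma_1(\infty)<\infty$; the dichotomy then gives $\sigma_2(\infty)=0$, which is alternative~2 with $m=2$. Condition (\ref{theo1.1}) is the $l=1$ case of (\ref{theo5.6}), and (\ref{theo1.2}) follows from $\eta(P_n,R)/k_n\to 0$ at each fixed~$R$ (via Lemma~\ref{lem2}(ii)) by a diagonal choice of $R_n\to\infty$.
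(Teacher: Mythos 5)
Your dichotomy argument (via $\sigma_{l+1}(R)\leq R^{-1}\sigma_l(R)$), the $l\geq m$ case of (\ref{theo5.6}), and the ``in particular'' clause are all correct, and the Fourier-coefficient identity for $\tau_l(n,r)$ is valid (it is the $l$-th harmonic of Jensen's formula). But the central step --- (\ref{theo5.6}) for $1\leq l<m$ --- has a genuine gap. Everything there rests on the uniform bounds $\log^+\|P_n\|_r/k_n\to 0$, $\log C(P_n,r)/k_n=O(1)$ and $\eta(P_n,r)/k_n=O(1)$, and your derivation of these is circular. Hartogs' lemma upgrades a pointwise $\limsup\leq 0$ to uniform control on compacta only if the sequence $u_n=\frac{1}{k_n}\log|P_n|$ is \emph{already} known to be locally uniformly bounded above; that is precisely what is at stake, and your justification of it from the factorization $P_n=Q_n\,\tilde\Phi_n\,\exp(-\sum_{l<m}\frac{z^l}{l}\tau_l(n,R))$ does not work. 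Only the factor $\tilde\Phi_n$ is controlled (by Lemma \ref{lem5} and $\sigma_m(\infty)=0$); the factor $Q_n$ is not controlled at this stage (bounding it needs $\eta(P_n,R)/k_n$ bounded, which in turn needs an upper bound on $C(P_n,R)^{1/k_n}$), and the exponential factor is a harmonic polynomial whose coefficients $\tau_l(n,R)/k_n$, $l<m$, are exactly the quantities whose boundedness the theorem asserts. A pointwise constraint on a product of three factors bounds none of them individually, so the chain ``factorization $\Rightarrow$ local uniform bound $\Rightarrow$ Hartogs $\Rightarrow$ reverse Fatou $\Rightarrow$ Jensen'' assumes what it is meant to prove.

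The paper's proof avoids ever needing a uniform bound on $P_n$ itself. It picks $R_n\to\infty$ with $\eta(P_n,R_n)/k_n$ bounded, splits $P_n=Q_nH_n$ with $Q_n$ carrying the zeros of modulus at most $R_n$, and controls $Q_n$ alone via Lemma \ref{lem3}, which applies because $Q_n$ has no distant zeros, so hypothesis (\ref{lem3.1}) is vacuous for it. Writing $\frac{1}{k_n}\log|H_n(z)|=\Re\sum_{l<m}z^l\beta_{n,l}+o(1)$, it then uses non-thinness of rays at infinity together with $C_0^*>0$ to produce, for each direction $\theta$ and each $s$, a single point $Re^{i\theta}$ with $R>s$ at which $|Q_n(Re^{i\theta})|^{1/k_n}\geq C_0^*/2$ along a subsequence; the pointwise hypothesis on $P_n$ at that one point bounds $\Re\sum_l R^le^{il\theta}\beta_{n,l}$ from above, and choosing $\theta$ aligned with the dominant $\beta_{n,l}$ and letting $R\to\infty$ forces that coefficient to vanish. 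If you wish to salvage your Fourier-coefficient route, you must first prove $\limsup_n\frac{1}{k_n}\log^+\|P_n\|_r=o(r)$ by some such device --- at which point you will essentially have reproduced the paper's argument.
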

\begin{proof}
Assume that alternative 1 is not true. Then there exists $m\in \mathbb N$ such that
\begin{equation}
\lim _{R\rightarrow\infty}\limsup _{n\rightarrow\infty}\frac{1}{k_n}\sum _{|z_{n,j}|\geq R}\frac{1}{|z_{n,j}|^m}=0 .\label{theo5.4}
\end{equation}
From Lemma \ref{lem2}, it follows that
\begin{equation}
\limsup _{n\rightarrow\infty}C(P_n,R)\leq 1
\end{equation}
for all $R>0$ (see definition of $C(P_n,R)$ in Section 2).

Now using Lemma \ref{lem2} ii)  we can choose a sequence $R_n\rightarrow\infty $ such that
\begin{eqnarray*}
\limsup _{n\rightarrow\infty}\frac{\eta (P_n,R)}{k_n}=\kappa <\infty .
\end{eqnarray*}
(In fact we can take $\kappa =0$ under assumptions of Theorem \ref{theo5}. However here we present the proof in a way such that it can be directly
generated to the more general cases, see the remarks right after this proof).

Then define
$$Q_n(z)=a_nz^{\alpha _n}\prod _{|z_{n,j}|\leq R_n}(1-z/z_{n,j}),$$ and
\begin{eqnarray*}
H_n(z)=\prod _{|z_{n,j}|>R_n}(1-z/z_{n,j}).
\end{eqnarray*}

Then we have
\begin{eqnarray*}
\limsup _{n\rightarrow\infty}C(Q_n,R)^{1/k_n}=\limsup _{n\rightarrow\infty}C(P_n,R)^{1/k_n}\leq 1
\end{eqnarray*}
for all $R>0$. Then from Lemma \ref{lem3} we get
\begin{eqnarray*}
u(z)=\limsup _{n\rightarrow\infty}\frac{1}{k_n}\log |Q_n(z)|\leq \log C_0
\end{eqnarray*}
for all $z\in \mathbb C$. Since $C_0^*\geq 0$, for any $\theta \in [0,2\pi ]$ and for any $s>0$, since the set $\{Re^{i\theta}:~R\geq s\}$ is non-thin at
$\infty$, by Proposition \ref{lemmulyav} (or Theorem \ref{theo1}) there exists $R>s$ such that
\begin{equation}
\limsup _{n\rightarrow\infty}|Q_n(Re^{i\theta })|^{1/k_n}\geq C_0^*/2.\label{theo5.5}
\end{equation}
Use the following Taylor expansion (see also Lemma \ref{lem5})
\begin{eqnarray*}
\log (1-z)=-z-\frac{z^2}{2}-\frac{z^3}{3}-\ldots -\frac{z^{m-1}}{m}+O(|z|^m)
\end{eqnarray*}
for $|z|\leq 1/2$, by definition of $H_n(z)$ and \ref{theo5.3} we have
\begin{eqnarray*}
\liminf_{n\rightarrow\infty}|H_n(z)|^{1/k_n}=\liminf _{n\rightarrow\infty}| \exp\{-\frac{1}{k_n}[\frac{z}{1}\sum _{|z_{n,j}|\geq
R}\frac{1}{z_{n,j}}+\frac{z^2}{2}\sum _{|z_{n,j}|\geq R}\frac{1}{z_{n,j}^2}+\ldots +\frac{z^{m-1}}{m-1}\sum _{|z_{n,j}|\geq
R}\frac{1}{z_{n,j}^{m-1}}]\}|.
\end{eqnarray*}
Define
\begin{eqnarray*}
\beta _{n,l}=-\frac{1}{k_n}\sum _{|z_{n,j}|\geq R_n}\frac{1}{lz_{n,j}^{l}}=|\beta _{n,l}|e^{i\theta _{n,l}}
\end{eqnarray*}
for $l=1,2,\ldots $ where $\theta _{n,l}\in [0,2\pi ]$ is the argument of $\beta _{n.l}$. To prove (\ref{theo5.6}) it suffices to show that
\begin{eqnarray*}
\lim _{n\rightarrow\infty}|\beta _{n,l}|=0
\end{eqnarray*}
for all $l=1,2,\ldots ,m-1$ (for $l\geq m$ this claim is true in view of (\ref{theo5.3})). Assume by contradiction that the above claim is not true.
Passing to a subsequence we may assume then that
\begin{eqnarray*}
\lim _{n\rightarrow\infty}|\beta _{n,l}|&=&\beta _l,\\
\lim _{n\rightarrow\infty}|\theta _{n,l}|&=&\theta _l
\end{eqnarray*}
for all $l=1,2,\ldots ,m$ where we allow $\beta _l$ can take value $+\infty$, and at least one of $\beta _l$ is positive (we include here $\beta _m=0$
for convenience). Then passing to a further subsequence we may choose an $l\in \{1,2,\ldots ,m-1\}$ such that
\begin{eqnarray*}
\lim _{n\rightarrow\infty}\frac{|\beta _{n,l}|}{|\beta _{n,h}|}=+\infty
\end{eqnarray*}
for $h>l$, and
\begin{eqnarray*}
\lim _{n\rightarrow\infty}\frac{|\beta _{n,l}|}{|\beta _{n,h}|}>0
\end{eqnarray*}
for $h<l$. Fixed $\theta =-\theta _l/l$, choose $R>0$ such that (\ref{theo5.5}) is satisfied. For that $R$, by choosing of $l$, we have
\begin{eqnarray*}
\log (2C_0/C_0^*)&\geq&\liminf _{n\rightarrow\infty}[Re^{i\theta}\beta _{n,1}+R^2e^{2i\theta }\beta _{n,2}+\ldots +R^{m-1}e^{(m-1)i\theta}\beta _{n,m-1}]\\
&\geq& |R|^l|\beta _l|-|R|^{l-1}O(|\beta _l|).
\end{eqnarray*}
Since $R$ can be chosen as large as we like we conclude that $\beta _l=0$ which is a contradiction. This completes the proof of Theorem \ref{theo5}.
\end{proof}
The conditions (\ref{theo5.3}) and (\ref{theo5.6}) in the alternative 2) of Theorem \ref{theo5} turn out to be enough to achieve the conclusions of
Theorems \ref{theo1} and Theorem \ref{theo2}. So in some sense it is optimal.
\begin{theorem}
If in the statements of Theorem \ref{theo1} and Theorem \ref{theo2} we replace the conditions $k_n\geq d^*(P_n)$ and (\ref{theo1.1}) by the conditions
(\ref{theo5.3}) and (\ref{theo5.6}) in the alternative 2) of Theorem \ref{theo5} while keeping the other conditions, then their conclusions are still
true.
 \label{theo6}\end{theorem}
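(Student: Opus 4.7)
Theorems \ref{theo1} and \ref{theo2} were proved in \cite{trong-tuyen2} via Lemmas \ref{lem2} and \ref{lem3}, where the hypotheses $k_n\ge d^*(P_n)$ and (\ref{theo1.1}) are used to control the contribution to $P_n$ coming from zeros $z_{n,j}$ of large modulus. The plan is to replace this tail control by a direct estimate from the decay conditions (\ref{theo5.3}) and (\ref{theo5.6}). For each large truncation radius $R$, I would factor
$$P_n(z)=Q_n^R(z)H_n^R(z),\quad Q_n^R(z)=a_nz^{\alpha_n}\!\!\!\prod_{|z_{n,j}|\le R}\!\!\!(1-z/z_{n,j}),\quad H_n^R(z)=\!\!\!\prod_{|z_{n,j}|>R}\!\!\!(1-z/z_{n,j}),$$
so that the analysis splits into handling the polynomial factor $Q_n^R$ (to which Proposition \ref{lemmulyav} or Theorems \ref{theo1}, \ref{theo2} apply) and the tail factor $H_n^R$ (which should contribute nothing in the $1/k_n$ limit).

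\textbf{Tail estimate.} Fix $R_0>0$ and take $R>2R_0$. Every factor in $H_n^R$ satisfies $|z/z_{n,j}|\le 1/2$ on $|z|\le R_0$, so with $m$ as in (\ref{theo5.3}) the Taylor expansion $\log(1-w)=-\sum_{l=1}^{m-1}w^l/l+O(|w|^m)$ gives
$$\log H_n^R(z)=-\sum_{l=1}^{m-1}\frac{z^l}{l}\sum_{|z_{n,j}|>R}\frac{1}{z_{n,j}^l}+O\Bigl(|z|^m\sum_{|z_{n,j}|>R}\frac{1}{|z_{n,j}|^m}\Bigr).$$
Dividing by $k_n$, taking $\limsup_n$, and then $R\to\infty$, condition (\ref{theo5.6}) annihilates each of the $m-1$ polynomial terms while (\ref{theo5.3}) annihilates the error. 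Hence for every $\varepsilon>0$ one can choose $R=R(\varepsilon,R_0)$ so large that $\limsup_n\sup_{|z|\le R_0}\bigl||H_n^R(z)|^{1/k_n}-1\bigr|<\varepsilon$, and in particular $|H_n^R|^{1/k_n}$ and $1/|H_n^R|^{1/k_n}$ are both bounded by $1+\varepsilon$ on that disk.

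\textbf{Polynomial part.} Now $Q_n^R$ is a polynomial of degree $\alpha_n+\eta(P_n,R)$, hence of genus zero with $d^*(Q_n^R)\le\deg Q_n^R$. Since $\eta(P_n,\cdot)$ is non-decreasing, condition (\ref{theo1.2}) forces $\eta(P_n,R)=O(k_n)$ for every fixed $R$. Evaluating the growth hypothesis at a point $z_0\in E$ with $|z_0|\ge 1$ (such points exist because $E$ is non-thin at $\infty$) and combining with the tail estimate then shows $\alpha_n=O(k_n)$. Thus $d^*(Q_n^R)\le C_Rk_n$. Moreover (\ref{theo1.1}) is trivially satisfied for $Q_n^R$ (no zeros of modulus $>R$), and (\ref{theo1.2}) is again obvious. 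The tail estimate converts the original growth bound on $P_n$ over $E$ into a bound on $Q_n^R$ over $E\cap\{|z|\le R/2\}$, which for $R\to\infty$ recovers control on arbitrarily large bounded pieces of $E$; Theorems \ref{theo1} and \ref{theo2} applied to $(Q_n^R)$ with the enlarged exponents $k_n'=C_Rk_n$ then bound $\|Q_n^R\|_{R_0}^{1/k_n}$.

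\textbf{Main obstacle.} The chief technical difficulty is that the tail estimate only produces control on $Q_n^R$ over the \emph{bounded} set $E\cap\{|z|\le R/2\}$, while Proposition \ref{lemmulyav} and Theorems \ref{theo1}, \ref{theo2} require control on the full non-thin set $E$. This forces a double limit: one has to pick $R$ large enough that the asymptotic behaviour of the polynomial $Q_n^R$ (with all zeros inside $|z|\le R$ and degree at most $C_Rk_n$) combined with the bound on $E\cap\{|z|\le R/2\}$ propagates to all of $E$, while keeping the constants in the tail estimate under control. Once this is achieved, $\|P_n\|_{R_0}\le\|Q_n^R\|_{R_0}\|H_n^R\|_{R_0}$, together with letting first $n\to\infty$ and then $\varepsilon\to 0$ (which forces $R\to\infty$), yields the conclusions of Theorems \ref{theo1} and \ref{theo2} under the weaker hypotheses (\ref{theo5.3}) and (\ref{theo5.6}).
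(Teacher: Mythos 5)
Your overall strategy---split $P_n$ into a truncated product and a tail, annihilate the tail with the Taylor expansion of $\log(1-z)$ together with (\ref{theo5.3}) and (\ref{theo5.6}), and feed the truncated part to the polynomial machinery of Proposition \ref{lemmulyav} and Theorems \ref{theo1}, \ref{theo2}---is exactly the route the paper takes (its proof of Theorem \ref{theo6} consists of one sentence: rerun the proofs of Theorems \ref{theo1} and \ref{theo2}, using the Taylor expansion of $\log(1-z)$ as in the proof of Theorem \ref{theo5}), and your tail estimate is correct as far as it goes. But the proof is not complete: the ``main obstacle'' you describe is a genuine gap that your write-up acknowledges without closing, and with a truncation radius $R$ chosen \emph{independently of $n$} it cannot be closed. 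For fixed $R$ you only learn that $\limsup_n|Q_n^R(z)|^{1/k_n}\le 1+\varepsilon$ on the bounded set $E\cap\{|z|\le R/2\}$, which is thin at infinity; a bound on a sequence of polynomials of unbounded degree over a compact set of positive capacity propagates to larger sets only with an exponential factor of the form $e^{c k_n}$ (Bernstein--Walsh), not with the constant $1$ that a set non-thin at infinity delivers. So the sentence ``once this is achieved'' carries the entire burden of the proof and is not justified.

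The repair is to let the truncation move with $n$: use the sequence $R_n\to\infty$ supplied by hypothesis (\ref{theo1.2}), which you keep, and set $Q_n(z)=a_nz^{\alpha_n}\prod_{|z_{n,j}|\le R_n}(1-z/z_{n,j})$ and $H_n(z)=\prod_{|z_{n,j}|>R_n}(1-z/z_{n,j})$, exactly as in the proofs of Theorems \ref{theo1}, \ref{theo2} and \ref{theo5}. On any fixed compact set one eventually has $|z/z_{n,j}|\le 1/2$ for all tail zeros, your expansion applies, and the tail sums (now taken over $|z_{n,j}|>R_n$) are disposed of by (\ref{theo5.3}) and (\ref{theo5.6}) as in the proof of Theorem \ref{theo5}; this yields $\lim_n|H_n(z)|^{1/k_n}=1$ locally uniformly on all of $\mathbb C$, not merely on a disk of radius comparable to the truncation. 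Consequently $\limsup_n|Q_n(z)|^{1/k_n}=\limsup_n|P_n(z)|^{1/k_n}$ at \emph{every} point of $E$, the hypothesis transfers in full to the polynomials $Q_n$ (whose degrees are $O(k_n)$ by (\ref{theo1.2})), the arguments of Theorems \ref{theo1} and \ref{theo2} apply verbatim, and multiplying back by $H_n$ returns the conclusion for $P_n$. One further point: your claim that $\alpha_n=O(k_n)$ follows from evaluating the hypothesis at a single $z_0\in E$ with $|z_0|\ge 1$ is unjustified, since $|P_n(z_0)|^{1/k_n}\le 1$ gives no lower bound on $|a_n|\prod_j|1-z_0/z_{n,j}|$ and hence no upper bound on $\alpha_n$; this normalization is what the constants $C_0$ and $C_0^*$ of Section 2 are used for in the paper's arguments.
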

\begin{proof}
The proof is the same the proofs of Theorems \ref{theo1} and Theorem \ref{theo2}, using the Taylor's expansion of $\log (1-z)$ as in the proof of Theorem
\ref{theo5}.
\end{proof}
Remarks:

1. The results in this section can be generalized to any sequence $(P_n)$ of finite orders $\rho _n\leq \rho <\infty$ for all $n$ (see the discussions in
the last Section).

2. The proofs of Theorem \ref{theo5} can be appropriately changed to get similar results for the case we have the more growth
\begin{eqnarray*}
\limsup _{n\rightarrow\infty}|P_n(z)|^{1/k_n}\leq 1+|z|
\end{eqnarray*}
for all $z\in \mathbb C$.

3. In the alternative 2 of Theorem \ref{theo5} if we take into account the fact that we can choose $R_n\rightarrow\infty$ such that
\begin{eqnarray*}
\lim _{n\rightarrow\infty}\frac{\eta (P_n,R_n)}{k_n}=0,
\end{eqnarray*}
we can write (\ref{theo5.6}) as
\begin{eqnarray*}
\lim _{n\rightarrow\infty}\frac{1}{k_n}\sum _{|z_{n,j}|\geq 1}\frac{1}{z_{n,j}^m}=0.
\end{eqnarray*}

4. The LHS of (\ref{theo5.6}) can be effectively computed in practice. In fact, if
\begin{eqnarray*}
P(z)=a\prod _{j}(1-\frac{z}{z_j})
\end{eqnarray*}
then formally
\begin{eqnarray*}
\log P(z)=\log a-\frac{z}{1}\sum _{j}\frac{1}{z_j}-\frac{z^2}{2}\sum _{j}\frac{1}{z_j^2}-\frac{z^3}{3}\sum _{j}\frac{1}{z_j^3}-\ldots
\end{eqnarray*}
Hence we can compute
\begin{eqnarray*}
\sum _{j}\frac{1}{z_j}=-\frac{P'(0)}{P(0)}
\end{eqnarray*}
and so on.
\section{Some consequences and refinements}
Our first example of applications in this section is the following extension of Theorem 1 in \cite{mul-yav} (for the convenience of the reader, we state
the result in a similar manner to that of Theorem 1 in \cite{mul-yav}):
\begin{theorem}
Let $\Gamma$ be a continuum in $\Bbb C$ (for example, a continuous curve which is not a point), and let $E\subset \Bbb C$ be closed such that $E$ is
non-thin at $\infty$. Assume that $(P_n)$ is a sequence of entire functions of genus zero such that $d^*(P_n)\leq k_n$, where $(k_n)$ is an increasing
sequence of integer numbers such that conditions (\ref{theo1.1}) and (\ref{theo1.2}) are satisfied. Moreover, suppose that the following two conditions
are satisfied:

i) There is some function $f$ which is analytic on some simply connected open set $G_f\subset \Bbb C$ containing $\Gamma$ with
\begin{eqnarray*}
\limsup_{n\rightarrow\infty}||f-P_n||^{1/k_n}_{\Gamma}<1.
\end{eqnarray*}

ii) For all $z\in E$
\begin{eqnarray*}
\limsup _{n\rightarrow\infty}|P_n(z)|^{1/k_n}\leq 1.
\end{eqnarray*}
Then the following statements are true:

1) If $(k_{n+1}/k_n)$ is bounded, then $f$ extends to an entire function, and for any compact set $K$ of $\Bbb C$, we have
\begin{eqnarray*}
\limsup _{n\rightarrow\infty}||f-P_n||^{1/k_n}_K<1.
\end{eqnarray*}

2) If $(k_n)$ is arbitrary, then for any compact $K\subset G_f$ we have
\begin{eqnarray*}
\limsup _{n\rightarrow\infty}||f-P_n||^{1/k_n}_K<1.
\end{eqnarray*}
\label{ApplicationTheorem}\end{theorem}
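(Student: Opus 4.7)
The plan is to combine the polynomial-type growth estimate coming from Theorem~\ref{theo1} applied to hypothesis ii) with a two-constants (Bernstein--Walsh) estimate, propagating the geometric smallness of $f-P_n$ on the continuum $\Gamma$ first to $G_f$ (for part~2) and then to all of $\mathbb C$ under the extra bounded-ratio hypothesis (for part~1).

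As the first step, I would invoke Theorem~\ref{theo1}: its hypotheses hold ($d^*(P_n)\leq k_n$, (\ref{theo1.1}), (\ref{theo1.2}), $E$ non-thin at $\infty$, and the bound in ii)), so
$$\limsup_{n\to\infty}||P_n||_R^{1/k_n}\leq 1,\qquad \forall R>0.$$
Hence on every disk the $P_n$ behave like polynomials of ``degree'' $k_n$. Hypothesis i) also provides $q\in(0,1)$ with $||f-P_n||_\Gamma \leq q^{k_n}$ for all large $n$.

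For part~2), fix a compact $K\subset G_f$ and choose an open $U$ with $\Gamma\cup K\subset U$ and $\overline U\subset G_f$. On the compact set $\overline U$, $f$ is bounded, while by the preceding step $||P_n||_{\overline U}\leq (1+\varepsilon)^{k_n}$, so $||f-P_n||_{\partial U}\leq C(1+\varepsilon)^{k_n}$. I then apply the two-constants theorem in the domain $U\setminus\Gamma$ to the subharmonic function $\tfrac{1}{k_n}\log|f-P_n|$, whose boundary values are at most $\log q$ on $\Gamma$ and at most $\log(1+\varepsilon)+o(1)$ on $\partial U$, obtaining
$$\frac{1}{k_n}\log|f(z)-P_n(z)|\leq \omega(z)\log q + (1-\omega(z))\log(1+\varepsilon) + o(1),$$
where $\omega(z)$ is the harmonic measure of $\Gamma$ in $U\setminus\Gamma$ from $z$. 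Because $\Gamma$ is a continuum (hence non-polar), $\omega\geq c>0$ on the compact set $K\setminus\Gamma$; combined with the trivial bound on $K\cap\Gamma$ and taking $\varepsilon$ small, this gives $\limsup_n||f-P_n||_K^{1/k_n}<1$.

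For part~1), with $k_{n+1}/k_n\leq M$, I apply the analogous two-constants argument to the \emph{entire} differences $P_{n+1}-P_n$ on the domain $D_R\setminus\Gamma$, $D_R=\{|z|\leq R\}$. Hypothesis i) yields $||P_{n+1}-P_n||_\Gamma\leq 2q^{k_n}$, while the bounded-ratio hypothesis together with Step~1 gives $||P_{n+1}-P_n||_{\partial D_R}\leq 2(1+\varepsilon)^{Mk_n}$. As $R\to\infty$, the harmonic measure of $\partial D_R$ from any fixed compact $K$ tends to $0$ (again because $\Gamma$ is non-polar), so for $R$ sufficiently large and $\varepsilon$ sufficiently small one obtains $\limsup_n||P_{n+1}-P_n||_K^{1/k_n}\leq\rho_K<1$. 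Since $k_n\to\infty$, the telescoping series $\sum(P_{n+1}-P_n)$ converges absolutely and uniformly on every compact subset of $\mathbb C$, so $(P_n)$ converges to some entire function $\tilde f$; by part~2), $\tilde f=f$ on $G_f$, giving the entire extension of $f$ and the geometric rate on every compact $K\subset\mathbb C$.

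\textbf{Main obstacle.} The chief technical work lies in the two-constants applications, specifically establishing the uniform lower bound $\omega\geq c>0$ on the relevant compact set (which relies on the non-polarity of the continuum $\Gamma$) and the decay $\omega_{\partial D_R}(z)\to 0$ as $R\to\infty$ used in part~1). A secondary delicate point is matching the exponent in the outer-boundary bound to order $k_n$: this is exactly the role of the bounded-ratio hypothesis in part~1), and its failure is the reason part~2) can only conclude for compact subsets of $G_f$ rather than of $\mathbb C$.
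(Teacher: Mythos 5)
Your proposal is correct and shares the paper's overall architecture --- first apply Theorem~\ref{theo1} to hypothesis~ii) to get $\limsup_n\|P_n\|_R^{1/k_n}\le 1$ for all $R$, then propagate the geometric smallness of $f-P_n$ from $\Gamma$ by a two-constants estimate --- but it differs in how the propagation is carried out. The paper outsources part~2) entirely to Walsh's overconvergence theorem (Theorem 10 of \cite{wal}), and for part~1) uses Walsh again to first manufacture a nonempty open set $U\subset G_f$ on which $\limsup_n\|P_{n+1}-P_n\|_U^{1/k_n}<1$, and only then runs the two-constants argument from $U$ outward. You instead give a self-contained two-constants argument in both parts, using the continuum $\Gamma$ itself as the inner boundary: for part~2) in a relatively compact neighbourhood $U\subset G_f$ of $\Gamma\cup K$ (where the non-polarity of $\Gamma$ gives $\omega(\cdot,\Gamma,U\setminus\Gamma)\ge c>0$ on $K$), and for part~1) in $D_R\setminus\Gamma$ applied to the telescoping differences $P_{n+1}-P_n$, with the bounded-ratio hypothesis controlling the outer-boundary exponent $k_{n+1}\le Mk_n$ and the estimate $\omega(z,\partial D_R,D_R\setminus\Gamma)\lesssim g_{\mathbb C\setminus\Gamma}(z,\infty)/\log R\to 0$ killing the outer contribution. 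What your route buys is independence from the black-box citation and a cleaner identification of exactly where each hypothesis enters (in particular, that the bounded ratio is needed only for the outer-boundary bound); what the paper's route buys is brevity and the extra generality of Walsh's theorem, which handles the overconvergence step in one stroke. The only points you should make fully precise in a write-up are the uniform lower bound on $\omega(\cdot,\Gamma,U\setminus\Gamma)$ over components of $U\setminus\Gamma$ (components whose boundary lies entirely in $\Gamma$ are handled by the maximum principle alone) and the uniformity on compacts of the decay of $\omega(\cdot,\partial D_R,D_R\setminus\Gamma)$; both are standard for non-polar continua.
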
 Remark: This theorem is an extension of Theorem 1 in \cite{mul-yav}: in case $(P_n)$ is a sequence of polynomials
and $k_n\geq deg(P_n)$, our result here is exactly that of Theorem 1 in \cite{mul-yav}. In fact, if we are in case 1) then as was shown in
\cite{mul-yav}, using Bernstein theorem we have that the condition i) in our Theorem is the same as the condition i) in Theorem \cite{mul-yav}. If we are
in case 2) then their condition and our condition are also the same.
\begin{proof} From the assumption, by Theorem \ref{theo1} we have
\begin{eqnarray*}
\limsup _{n\rightarrow\infty}||P_n||^{1/k_n}_K\leq 1
\end{eqnarray*}
for any compact set $K\subset \Bbb C$.

For proof of 2), we use Theorem 10 in \cite{wal}.

Now we prove 1). Again, by Theorem 10 in \cite{wal}, and assumption that $(k_{n+1}/k_n)$ is bounded, there is a non-empty open set $U$ contained in $G_f$
for which
\begin{eqnarray*}
\limsup _{n\rightarrow\infty}||P_{n+1}-P_n||^{1/k_n}_U<1.
\end{eqnarray*}
Then use the property at the beginning of this proof, argue similarly to the proof of Theorem 1 in \cite{mul-yav} (that is, use two-constants theorem) we
get that
\begin{eqnarray*}
\limsup _{n\rightarrow\infty}||P_{n+1}-P_n||^{1/k_n}_K<1
\end{eqnarray*}
for any compact set $K\subset \Bbb C$. From this, it is straightforward to imply that $f$ extends to an entire function and
\begin{eqnarray*}
\limsup _{n\rightarrow\infty}||f-P_n||^{1/k_n}_K<1
\end{eqnarray*}
for any compact set $K\subset \Bbb C$.
\end{proof}
Now we come to another interesting corollary, which gives a criterion for local convergence of a summation of a sequence of entire functions of genus
zero
\begin{theorem} Assume that $(P_n)$ is a sequence of entire functions of genus zero, $(k_n)$ is an increasing sequence of integer numbers, and $E$ is a closed
subset of $\Bbb C$ such that conditions of Theorem \ref{theo2} are satisfied. Moreover assume that there exist $z_0\in \Bbb C$ and $\rho >0$ such that
\begin{eqnarray*}
C:=\limsup _{n\rightarrow\infty}\exp\{\frac{1}{2\pi k_n}\int _0^{2\pi}\log |P_n(z_0+\rho \theta )|d\theta\}<1.
\end{eqnarray*}
Let $\beta >0,\gamma$ denote the constants in the statement of Theorem \ref{theo2}. Then $\sum _{n=1}^{\infty}P_n$ converges uniformly in the set
$\{|z-z_0|<\rho (\frac{1}{C^{\beta /\gamma }}-1)\}$. In particular, if $\gamma =0$ then $\sum _{n=1}^{\infty}P_n$ converges uniformly in $\Bbb C$.
\label{MainApplicationTheorem}\end{theorem}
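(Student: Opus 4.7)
The strategy is to shift the problem so that $z_0$ becomes the origin and $\rho$ becomes unit radius, then apply Theorem \ref{theo2} to the affinely transformed sequence. Introduce $\tilde{P}_n(z) := P_n(z_0 + \rho z)$. Each $\tilde{P}_n$ is entire of genus zero with zeros $\tilde{z}_{n,j} = (z_{n,j} - z_0)/\rho$, and the substitution $w = z_0 + \rho e^{it}$ in the defining integral gives
\[
C(\tilde{P}_n, 1) = \exp\left\{\frac{1}{2\pi}\int_0^{2\pi}\log|P_n(z_0 + \rho e^{it})|\,dt\right\},
\]
so the hypothesis on the log-mean is exactly $\tilde{C}_0 := \limsup_{n\to\infty} C(\tilde{P}_n,1)^{1/k_n} = C < 1$.

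Next I would verify that $(\tilde{P}_n)$, together with the translated-rescaled set $\tilde{E} := (E - z_0)/\rho$, fulfills the hypotheses of Theorem \ref{theo2}. The capacity condition (\ref{theo2.1}) for $\tilde{E}$ with the same exponent $\beta$ follows from the affine behaviour $\mathrm{cap}((S - z_0)/\rho) = \mathrm{cap}(S)/\rho$ applied to $S = E \cap \{|w - z_0| \leq \rho R\}$, together with $\log(\rho R \pm |z_0|)/\log R \to 1$. The growth bound on $\tilde{E}$ reads $|\tilde{P}_n(\tilde{z})|^{1/k_n} \leq h(|z_0| + \rho|\tilde{z}|)$, which has the same log-growth exponent $\gamma$. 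Conditions (\ref{theo1.1})--(\ref{theo1.2}) transfer by comparing $|z_{n,j} - z_0|$ with $|z_{n,j}|$ for large zeros; the delicate point is the degree inequality $k_n \geq d^*(\tilde{P}_n)$, which I would handle by invoking Theorem \ref{theo6} and verifying the more affine-invariant conditions (\ref{theo5.3}) and (\ref{theo5.6}) directly for the shifted zeros.

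Applying Theorem \ref{theo2} (or Theorem \ref{theo6}) to $(\tilde{P}_n)$ then yields
\[
\limsup_{n\to\infty}\|\tilde{P}_n\|_R^{1/k_n} \leq C(1+R)^{\gamma/\beta}\quad\text{for every } R > 0.
\]
Translating back via $\|\tilde{P}_n\|_R = \sup_{|w-z_0|\leq \rho R}|P_n(w)|$ and choosing any $R < C^{-\beta/\gamma} - 1$, the right-hand side is strictly below $1$, so I fix $q \in (C(1+R)^{\gamma/\beta},1)$; for $n$ large, $\sup_{|w-z_0|\leq \rho R}|P_n(w)| \leq q^{k_n}$. Since $(k_n)$ is an increasing sequence of positive integers one has $k_n \geq n$, hence $\sum_n q^{k_n} < \infty$ and the Weierstrass $M$-test delivers uniform convergence of $\sum P_n$ on the closed disk $\{|w - z_0| \leq \rho R\}$. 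Exhausting $R \uparrow C^{-\beta/\gamma} - 1$ yields uniform convergence on compact subsets of $\{|w - z_0| < \rho(C^{-\beta/\gamma} - 1)\}$. The case $\gamma = 0$ is the same argument with $\gamma/\beta$ interpreted as $0$: the bound becomes $C < 1$ for every $R$, so convergence is uniform on all of $\mathbb{C}$.

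The principal obstacle is the bookkeeping in the second step: verifying that the hypotheses of Theorem \ref{theo2} really are preserved under the affine change of variable. The capacity and growth-on-$E$ pieces are essentially routine, but the degree bound $d^*(\tilde{P}_n) \leq k_n$ does not follow from $d^*(P_n) \leq k_n$ (e.g., $\#\{|z_{n,j} - z_0| \leq \rho\}$ can be much larger than $\#\{|z_{n,j}| \leq 1\}$ when $|z_0|$ is large). Replacing Theorem \ref{theo2} by Theorem \ref{theo6} and checking (\ref{theo5.3})--(\ref{theo5.6}) for the shifted zeros appears to be the cleanest remedy.
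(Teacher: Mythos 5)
Your proposal is correct and follows essentially the same route as the paper: define $Q_n(z)=P_n(z_0+\rho z)$, reduce to $z_0=0$, $\rho=1$, apply Theorem \ref{theo2} to get $\limsup_n\|P_n\|_R^{1/k_n}\leq C(1+R)^{\gamma/\beta}$, and sum geometrically using $k_n\geq n$. The only divergence is in handling the degree condition after the affine change: you route through Theorem \ref{theo6} and the shift-stable conditions (\ref{theo5.3})--(\ref{theo5.6}), whereas the paper simply observes that $k_n\geq d^*(Q_n)$ can be relaxed to $\limsup_n d^*(Q_n)/k_n<\infty$ throughout; both resolutions of the issue you correctly flagged are adequate.
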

\begin{proof}
Define $Q_n(z)=P_n(z_0+\rho z)$. Because we will apply Theorem \ref{theo2} for $P_n$, we check here that the conditions of Theorem \ref{theo2} is
satisfied for sequences $(Q_n)$ and $(k_n)$. First not that the condition $k_n\geq d^*(Q_n)$ is in fact not needed in the proof of all previous results,
we need only that
\begin{eqnarray*}
\limsup _{n\rightarrow\infty}\frac{d^*(Q_n)}{k_n}<\infty ,
\end{eqnarray*}
and this condition is obviously satisfied from our assumptions on $P$. We can also choose $R_n's$ for $Q$ such that the other conditions of Theorem
\ref{theo2} are satisfied.

From the above analysis, without loss of generality, we may assume that $z_0=0,~\rho =1$.

Then $C=C_0$, where $C_0$ is the constant defined in the beginning of Section 2. By Theorem \ref{theo2}, we have
\begin{eqnarray*}
\limsup _{n\rightarrow\infty}||P_n||^{1/k_n}_R\leq C_0(1+R)^{\gamma /\beta}
\end{eqnarray*}
for any $R>0$. Then the conclusions of Theorem \ref{MainApplicationTheorem} is straightforward to obtain.
\end{proof}
Theorem \ref{MainApplicationTheorem} can be applied to such sequences as $P_n(z)=z^n$ and $k_n=n$. In fact, the conditions of Theorem
\ref{MainApplicationTheorem} are natural: If a sequence of polynomials $P_n(z)$ is bounded in any nonempty open set of $\mathbb C$ then
\begin{eqnarray*}
|P_n(z)|^{1/deg(P_n)}\leq C(1+|z|)
\end{eqnarray*}
for all $z\in\mathbb C$ where $C$ is a constant.

We finish this section by outlining some refinements (see also Theorems \ref{theo5} and \ref{theo6}):

Firstly, in theory we can effectively choose a sequence $(k_n)$ satisfying condition (\ref{theo1.2}), given a sequence of entire function of genus zero
$(P_n)$. To this end, we note that if $P(z)$ is an entire function of genus zero, then Lemma 2 in \cite{lev} shows that
\begin{eqnarray*}
\rho _1(P):=\limsup _{r\rightarrow\infty}\frac{\log \eta (P,r)}{\log r}\leq 1.
\end{eqnarray*}
Here $\rho _1(P)$ is called the order of counting function for $P(z)$. (It is known, see Theorem 2 page 18 in \cite{lev}, that $\rho _1(P)$  does not
exceed the growth order $\rho (P)$ that we defined in Section 2.) Hence condition (\ref{theo1.2}) is satisfied if we choose $k_n$ as
\begin{eqnarray*}
k_n\approx R_n^{\rho (P_n)}\lesssim R_n.
\end{eqnarray*}

Lastly, condition (\ref{theo1.1}) is trivially satisfied if the functions $P_n(z)$ have some symmetric properties. For example, this is the case if
$P_n(z)$ has the form
\begin{eqnarray*}
P_n(z)=\prod _{m}P_{n,m}(z)
\end{eqnarray*}
where $P_{n,m}$ are polynomials whose sums of the reverses of zeros (roots) are zero identically, and such that the absolute value of any zero of
$P_{n,m+1}$ is greater than the absolute value of any zero of $P_{n,m}$, for any $m$. It was observed in \cite{lev} (page 32) that this symmetric
property affects the growth of an entire function.

\section{Examples}
Example 1: grouped power series.

A grouped power series which can be formally written as
\begin{equation}
s(z)=\sum _{n=0}^{\infty}z^{k_n}P_{n}(z)\label{Sec6.1}
\end{equation}
where $(k_n)$ be an increasing sequence of positive integers, and $P_n(z)$ are entire functions. In \cite{mul-yav} they considered the case where
$P_n(z)$ are polynomials of degree $\mu _k$ such that $\lambda _k+\mu _k<\lambda _{k+1}$. Here we consider a more general case that $P_n(z)$ are entire
functions of genus zero.

The following result is a generalization of Corollary page 199 in \cite{mul-yav}
\begin{proposition}
Let $E$ be a subset of $\Bbb C$ which is non-thin at $\infty$. Let $s(z)$ be defined as in (\ref{Sec6.1}) where $(P_n)$ is a sequence of entire functions
of genus zero, and where $(k_n)$ is an increasing sequence of positive integers. Assume more that $k_n\geq d^*(P_n)$ for all $n$, and conditions
(\ref{theo1.1}) and (\ref{theo1.2}) are satisfied.

If $s(z)$ converges pointwise on $E$ then it converges uniformly on compact sets of $\Bbb C$. \label{GroupedPowerSeries}\end{proposition}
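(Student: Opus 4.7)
The plan is to reduce the statement to Theorem~\ref{theo1} applied to a rescaled sequence on a truncation of $E$ near infinity, and then to let the truncation radius tend to $\infty$ so as to beat the growth of the factor $z^{k_n}$.

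The first step is to convert pointwise convergence of $s(z)=\sum z^{k_n}P_n(z)$ into a pointwise growth bound on $P_n$. If the series converges at $z\in E$, the general term satisfies $z^{k_n}P_n(z)\to 0$, so for every $z\in E$ with $|z|>1$ one has $|P_n(z)|\le|z|^{-k_n}$ for all large $n$, whence
\[
\limsup_{n\to\infty}|P_n(z)|^{1/k_n}\le\frac{1}{|z|}.
\]

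Next I would fix $R_0>1$, set $E_0=E\cap\{|z|\ge R_0\}$, and consider the rescaled sequence $\tilde P_n(z)=R_0^{k_n}P_n(z)$. The truncated set $E_0$ is still closed and non-thin at $\infty$, because non-thinness at $\infty$ is determined by the behaviour of $E$ near $\infty$ (via Wiener's criterion, for instance) and is unaffected by removing a bounded piece. Multiplying $P_n$ by a positive constant preserves the representation (\ref{RepresentationFormula}), its zeros, the integer $\alpha_n$, and the quantity $d^*(P_n)$; hence $k_n\ge d^*(\tilde P_n)$ together with (\ref{theo1.1}) and (\ref{theo1.2}) continue to hold for $(\tilde P_n)$ with the same $(k_n)$. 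From the first step, for every $z\in E_0$,
\[
\limsup_{n\to\infty}|\tilde P_n(z)|^{1/k_n}=R_0\,\limsup_{n\to\infty}|P_n(z)|^{1/k_n}\le\frac{R_0}{|z|}\le 1,
\]
so Theorem~\ref{theo1} applied to $(\tilde P_n)$ and $E_0$ yields $\limsup_n\|\tilde P_n\|_R^{1/k_n}\le 1$ for every $R>0$, i.e.\ $\limsup_n\|P_n\|_R^{1/k_n}\le 1/R_0$. Since $R_0>1$ is arbitrary, $\lim_n\|P_n\|_R^{1/k_n}=0$ for every $R>0$.

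Finally, given a compact $K\subset\mathbb{C}$, choose $R$ with $K\subset\{|z|\le R\}$ and take $\varepsilon=1/(2R)$. The previous step produces $N$ with $\|P_n\|_R\le\varepsilon^{k_n}$ for $n\ge N$, hence $\sup_{z\in K}|z^{k_n}P_n(z)|\le R^{k_n}\varepsilon^{k_n}=2^{-k_n}$, which is summable since $(k_n)$ is an increasing sequence of positive integers; this gives uniform (indeed geometric) convergence of $\sum z^{k_n}P_n(z)$ on $K$. The only step requiring genuine insight is the rescaling trick in the third paragraph: converting the pointwise information $|P_n(z)|^{1/k_n}\le 1/|z|$ on a large portion of $E$ into the hypothesis $\limsup|\tilde P_n|^{1/k_n}\le 1$ on a set non-thin at $\infty$ that Theorem~\ref{theo1} can digest, and then exploiting the freedom to let $R_0\to\infty$ to kill the $z^{k_n}$ factor; the rest is routine bookkeeping on how the hypotheses of Theorem~\ref{theo1} transform under multiplication by a constant and under truncation by a disk at infinity.
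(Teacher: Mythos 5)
Your argument is correct, and it turns on the same pivot as the paper's proof --- Theorem \ref{theo1} --- but the bookkeeping around it is genuinely different. The paper absorbs the monomial into the function: it sets $Q_n(z)=z^{k_n}P_n(z)$ and applies Theorem \ref{theo1} to the pair $(Q_n,2k_n)$ (noting $d^*(Q_n)=k_n+d^*(P_n)\le 2k_n$, and that the zero added at the origin enters neither (\ref{theo1.1}) nor (\ref{theo1.2})); it then recovers $\limsup_n\|P_n\|_R^{1/(2k_n)}\le R^{-1/2}$ via the maximum principle and upgrades this bound to $0$ using the monotonicity of $R\mapsto\|P_n\|_R$. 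You instead keep the pair $(P_n,k_n)$, rescale by the constant $R_0^{k_n}$, truncate $E$ to $E\cap\{|z|\ge R_0\}$ --- which is indeed still non-thin at $\infty$, thinness being a local property under inversion --- and let $R_0\to\infty$. Both devices serve the same end: Theorem \ref{theo1} only ever yields $\limsup\le 1$, and one must exploit the extra decay $\limsup_n|P_n(z)|^{1/k_n}\le 1/|z|$ forced by the gap factor $z^{k_n}$ to push the limit down to $0$. Your version has the small advantage of not requiring re-verification of the hypotheses of Theorem \ref{theo1} for a modified sequence with doubled exponents (multiplication by a constant changes neither the zeros nor $d^*$), at the cost of invoking the locality of non-thinness at $\infty$; the paper's version works on all of $E$ at once and never changes the set. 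Your explicit final step, $\sup_K|z^{k_n}P_n(z)|\le 2^{-k_n}$ with $\sum_n 2^{-k_n}<\infty$ since $k_n\ge n$, correctly fills in what the paper leaves as ``it follows easily.''
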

\begin{proof}
Define
\begin{eqnarray*}
Q_n(z)=z^{k_n}P_n(z).
\end{eqnarray*}
Then the sequences $(Q_n)$ and $(2k_n)$ satisfy conditions (\ref{theo1.1}) and (\ref{theo1.2}), and $2k_n\geq d^*(Q_n)$ for all $n$.

Now since $s(z)$ converges pointwise on $E$, we have
\begin{eqnarray*}
Q_n(z)=s_{n}(z)-s_{n-1}(z)
\end{eqnarray*}
converges pointwise to $0$ on $E$. In particular,
\begin{eqnarray*}
\limsup _{n\rightarrow\infty}|Q_n(z)|^{1/(2k_n)}\leq 1
\end{eqnarray*}
for all $z\in E$. Apply Theorem \ref{theo1} we get
\begin{eqnarray*}
\limsup _{n\rightarrow\infty}||Q_n||_R^{1/(2k_n)}\leq 1
\end{eqnarray*}
for all $R>0$. From the definition of $Q_n$ we have that
\begin{eqnarray*}
\limsup _{n\rightarrow\infty}||P_n||_R^{1/(2k_n)}\leq R^{-1/2}
\end{eqnarray*}
for all $R>0$. Now using that the LHS of above inequality is not decreasing as a function of $R$, we have that
\begin{eqnarray*}
\limsup _{n\rightarrow\infty}||P_n||_R^{1/(2k_n)}=0
\end{eqnarray*}
for all $R>0$. Then we also have
\begin{eqnarray*}
\limsup _{n\rightarrow\infty}||Q_n||_R^{1/(2k_n)}=0
\end{eqnarray*}
for all $R>0$. From this it follows easily that $s(z)$ converges uniformly in every compact subset of $\Bbb C$.
\end{proof}
Example 2: Fourier transform with real kernels of compact support.

Let $\varphi$ be a nontrivial function of $L^1(\RR )$. Then its Fourier transform is defined by
\begin{eqnarray*}
\widehat{\varphi}(x)=\int _{-\infty}^{\infty}e^{-ixt}\varphi (t)dt,~x\in \RR .
\end{eqnarray*}
If $\varphi$ is of compact support, there associates an entire function of exponential type of Laplace transform type
\begin{equation}
\Phi (z)=\int _{-\infty}^{\infty}e^{zt}\varphi (t)dt.\label{fou}
\end{equation}
From Lemma 3 in \cite{trong-tuyen1}, if $\varphi (t)=0$ for all $t\leq 0$ then $\Phi (z)$ has a representation
\begin{equation}
\Phi (z)=Ce^{z\frac{\sigma +\mu }{2}}z^{\alpha }\prod _{z_{j}\in \mathbb R}(1-\frac{z}{z_{j}})\prod
_{Im~z_{j}>0}(1-\frac{z}{z_{j}})(1-\frac{z}{\overline{z_{j}}}),\label{Sec7.0}
\end{equation}
where
\begin{eqnarray*}
\sigma &=&\sup \{a>0: \varphi |_{[0,a]}=0~a.e.\},\\
\mu &=&\inf \{a\in [0,\infty ):\varphi |_{[a,\infty )}=0~a.e.\}.
\end{eqnarray*}
Moreover
\begin{eqnarray*}
\sum _{j}|Re(\frac{1}{z_{j}})|&<&\infty ,\\
\sum _{j}\frac{1}{|z_{j}|^q}&<&\infty
\end{eqnarray*}
for all $q>1$. From (\ref{Sec7.0}) we can explore the growth of a sequence of entire functions $(\Phi _n)$ which are Laplace transforms of real kernel
with compact support in a manner that is similar to that of Sections 2, 3, and 4, under appropriate changes on the assumptions  (See Theorem
\ref{FourierTransformationTheorem} for one way to doing that). For example, we may define the degree of an entire function $\Phi (z)$ having the
representation (\ref{Sec7.0}) by
\begin{eqnarray*}
d^*(\Phi )=\sigma +\mu +\alpha +\sum _{j}|Re~\frac{1}{z_j}|+\sum _{j}\frac{1}{|z_j|^q}
\end{eqnarray*}
for some fixed $1<q\leq 2$. However we will not pursuit this issue in this section (see also discussions in the last Section).

Now we go back to the Laplace transform (\ref{fou}). Generally, if $\varphi\in L^1(\RR )$  satisfies
\begin{equation}
\int _{-\infty}^{\infty}e^{|t|s}|\varphi (t)|dt <\infty , \label{150506}
\end{equation}
for all $s>0$ then $\Phi$ in (\ref{fou}) is defined on all over the complex plane $\CC$ and is an entire function, but its order may be any positive
number. If $\varphi \geq 0$ then condition (\ref{150506}) is also necessary for $\Phi$ to be an entire function. Indeed, in this case we have, for every
$R>0$,
\begin{eqnarray*}
\max _{|z|\leq R}|\Phi (z)|\geq \frac{1}{2}(\Phi (R)+\Phi (-R))\geq \frac{1}{2}\int _{-\infty}^{\infty}e^{|t|R}\varphi (t)dt.
\end{eqnarray*}
Condition (\ref{150506}) is rather strict. We may ask a more general question: For what function $\varphi$ being nontrivial and in $L_{loc}^1(\mathbb R)$
(meaning $\varphi \in L^1(K)$ for all compact set $K$ of $\mathbb R$) that the sequence
\begin{equation}
\Phi _n(z)=\int _{-n}^ne^{zt}\varphi (t)dt\label{Sec7.1}
\end{equation}
converges locally uniformly in $\mathbb C$ to an entire nonzero function $\Phi (z)$? Applying the results in previous sections, we will show that there
is an obstruction to this question, which can be presented in terms of zeros of $\Phi _n(z)$ (see Theorem \ref{FourierTransformationTheorem}).

For simplicity we will assume that $\varphi (t)=0$ for $t\leq 0$. From formula (\ref{Sec7.0}), for $n$ large enough, $\Phi _n$ has a representation
\begin{equation}
\Phi _n(z)=C_ne^{z\frac{\sigma +\mu _n}{2}}z^{\alpha _n}\prod _{z_{j,n}\in \mathbb R}(1-\frac{z}{z_{j,n}})\prod
_{Im~z_{j,n}>0}(1-\frac{z}{z_{j,n}})(1-\frac{z}{\overline{z_{j,n}}}),\label{Sec7.2}
\end{equation}
where
\begin{eqnarray*}
\sigma &=&\sup \{a>0: \varphi |_{[0,a]}=0~a.e.\},\\
\mu _n&=&\inf \{a\in [0,n]:\varphi |_{[a,n]}=0~a.e.\}.
\end{eqnarray*}
Moreover
\begin{eqnarray*}
\sum _{j}|Re(\frac{1}{z_{j,n}})|&<&\infty ,\\
\sum _{j}\frac{1}{|z_{j,n}|^q}&<&\infty
\end{eqnarray*}
for all $q>1$.

Then we have the following result
\begin{theorem}
Given $\varphi \in L^1_{loc}(\mathbb R)$, $\varphi (t)=0$ for $t\leq 0$, and the support of $\varphi$ is noncompact. Let $\Phi _n$ be defined as above,
and let (\ref{Sec7.2}) be its representation. Let $\sigma$ and $\mu _n$ be defined as above. Assume that the following three conditions are true

1.
\begin{eqnarray*}
\lim _{R\rightarrow\infty}\limsup _{n\rightarrow\infty}\frac{1}{\mu _n}\sum _{|z_{j,n}|\geq R}|Re(\frac{1}{z_{j,n}})|<\infty .
\end{eqnarray*}

2.\begin{eqnarray*} \lim _{R\rightarrow\infty}\limsup _{n\rightarrow\infty}\frac{1}{\mu _n}|\sum _{|z_{j,n}|\geq R}\frac{1}{z_{j,n}}|=0.
\end{eqnarray*}

3. There exists $1<q<2$ such that
\begin{eqnarray*}
\lim _{R\rightarrow\infty}\limsup _{n\rightarrow\infty}\frac{1}{\mu _n}\sum _{|z_{j,n}|\geq R}\frac{1}{|z_{j,n}|^q}<\infty .
\end{eqnarray*}

Then there is no nonzero entire function $\Phi (z)$ such that $\Phi _n(z)$ converges locally uniformly in $\mathbb C$ to $\Phi (z)$.
\label{FourierTransformationTheorem}\end{theorem}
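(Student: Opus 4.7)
The plan is to argue by contradiction: suppose $\Phi_n \to \Phi$ locally uniformly on $\mathbb{C}$ with $\Phi$ a nonzero entire function, and derive the impossible identity $0 = R/2$ by evaluating the logarithm of the representation (\ref{Sec7.2}) at a generic real $R>0$. The starting observation is that since $\varphi$ has noncompact support and vanishes on $(-\infty,0]$, one has $\mu_n \to \infty$, so $(k_n) = (\mu_n)$ is a natural scale.

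Fix $R>0$ with $\Phi(R)\neq 0$ (possible because the zero set of $\Phi$ is discrete). Local uniform convergence gives $\Phi_n(R)\to\Phi(R)\neq 0$, so $\log|\Phi_n(R)|$ stays bounded and hence $\log|\Phi_n(R)|/\mu_n \to 0$. Taking logarithms in (\ref{Sec7.2}) at $z=R$ and dividing by $\mu_n$ yields
$$\frac{\log|\Phi_n(R)|}{\mu_n} = \frac{\log|C_n|}{\mu_n} + \frac{R(\sigma+\mu_n)}{2\mu_n} + \frac{\alpha_n \log R}{\mu_n} + \frac{\Sigma_n(R)}{\mu_n},$$
where $\Sigma_n(R)$ collects $\log|1-R/z_{j,n}|$ over real zeros plus $\log|(1-R/z_{j,n})(1-R/\overline{z_{j,n}})|$ over conjugate pairs. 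By Hurwitz's theorem, $\alpha_n$ eventually equals the order of vanishing of $\Phi$ at $0$, and $C_n = \Phi_n^{(\alpha_n)}(0)/\alpha_n!$ converges to a nonzero limit; thus the first and third terms vanish while the exponential term tends to $R/2$. Combined with the left-hand side going to zero, this forces $\Sigma_n(R)/\mu_n \to -R/2$.

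The main obstacle is then to prove independently that $\Sigma_n(R)/\mu_n \to 0$, producing the contradiction $0 = R/2$. Split $\Sigma_n(R) = \Sigma_n^{<R_0}(R) + \Sigma_n^{\geq R_0}(R)$ at a threshold $R_0$ chosen so that $\Phi$ has no zero on $|z|=R_0$. For the head sum, Hurwitz guarantees the zeros of $\Phi_n$ in $\{|z|<R_0\}$ converge (with multiplicity) to those of $\Phi$, so $\Sigma_n^{<R_0}(R)$ converges to a finite limit and $\Sigma_n^{<R_0}(R)/\mu_n \to 0$. For the tail, use the Taylor expansions (valid for $|z|\geq R_0 > 2R$)
\begin{align*}
\log|(1-R/z)(1-R/\overline{z})| &= -2R\,\mathrm{Re}(1/z) + R^2/|z|^2 + O((R/|z|)^3),\\
\log|1-R/z| &= -R/z + O((R/z)^2) \quad \text{for real } z,
\end{align*}
to rewrite
$$\Sigma_n^{\geq R_0}(R) = -R \sum_{|z_{j,n}|\geq R_0}\frac{1}{z_{j,n}} + R^2 \sum_{|z_{j,n}|\geq R_0}\frac{1}{|z_{j,n}|^2} + (\text{higher-order terms}),$$
where each $z_{j,n}$ is counted individually so conjugates appear separately. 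Condition 2 makes the first sum divided by $\mu_n$ vanish as $R_0\to\infty$; condition 3, together with $1/|z|^2 \leq R_0^{q-2}/|z|^q$ for $|z|\geq R_0$ and the strict inequality $q<2$, makes the second sum divided by $\mu_n$ at most $O(R_0^{q-2})$, which also vanishes as $R_0\to\infty$. The higher-order terms are handled analogously, with condition 1 providing the absolute summability needed to justify term-by-term truncation and rearrangement. Letting $n\to\infty$ and then $R_0\to\infty$ gives $\Sigma_n(R)/\mu_n \to 0$, contradicting the conclusion of the previous paragraph. The hard part is the tail estimate: conditions 2 and 3 must conspire to defeat the dominant exponential factor, and the role of $q<2$ in condition 3 is precisely to render the $R^2/|z|^2$ term negligible after the threshold is sent to infinity.
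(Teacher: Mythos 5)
Your proposal is essentially sound, but it follows a genuinely different route from the paper's. The paper isolates the truncated genus-zero part $Q_n(z)=C_nz^{\alpha_n}\prod_{|z_{j,n}|\le R_n}(1-z/z_{j,n})$, uses conditions 1--3 to show $\limsup_n|e^{z/2}|\,|Q_n(z)|^{1/\mu_n}\le 1$, observes that on the ray $\{z\in\mathbb R:\ z\ge 2R\}$ this forces $|Q_n(z)|^{1/\mu_n}\le e^{-R}$, and then invokes Theorem \ref{theo1} (the ray being non-thin at $\infty$) to propagate this bound to all of $\mathbb C$; letting $R\to\infty$ gives $|\Phi_n|^{1/\mu_n}\to 0$ everywhere, contradicting $\Phi\not\equiv 0$. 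You instead work at a single real point $R$ with $\Phi(R)\ne 0$ and show directly that the logarithm of the zero product is $o(\mu_n)$ (head by Hurwitz, tail by conditions 2 and 3 via the expansion of $\log(1-w)$), so it cannot cancel the term $R\mu_n/2$ contributed by the exponential factor. Your route is more elementary --- it bypasses the potential-theoretic machinery entirely --- at the price of yielding only a pointwise contradiction rather than the stronger global conclusion $|\Phi_n|^{1/\mu_n}\to 0$ that the paper's argument produces. One soft spot in your write-up: the claim that $\alpha_n$ stabilizes and $C_n$ converges to a nonzero limit is automatic only when $\Phi(0)\ne 0$. If $\Phi(0)=0$, then $\Phi_n$ may have zeros $z_{j,n}\to 0$ with $\alpha_n$ strictly smaller than the vanishing order of $\Phi$ at $0$; in that case $C_n\to 0$ while the corresponding head terms $\log|1-R/z_{j,n}|\to+\infty$, and neither $\log|C_n|/\mu_n$ nor the head sum is separately controlled. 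This is patchable --- either group $C_nz^{\alpha_n}$ together with the factors from zeros in a small disc about $0$, whose combined value at $z=R$ converges because $\Phi_n\to\Phi$ locally uniformly, or run the argument at two radii $R_1\ne R_2$ and note that the $R$-independent quantity $\log|C_n|/\mu_n$ cannot tend to both $-R_1/2$ and $-R_2/2$ --- but as written that step is incomplete.
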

\begin{proof}
We prove by contradiction. Assume by contradiction that $\Phi _n(z)$ converges locally uniformly in $\mathbb C$ to a nonzero entire function $\Phi (z)$.

Since the support of $\varphi$ is noncompact, we have
\begin{equation}
\lim _{n\rightarrow\infty}\mu _n =\infty .\label{FourierTransformationTheorem.1}
\end{equation}
Hence
\begin{eqnarray*}
\lim _{n\rightarrow\infty}|P_n(z)|^{1/\mu _n}\leq 1,
\end{eqnarray*}
for all $z\in \mathbb C$, and the equality is true at every $z\mathbb C$ with $\Phi (z)\not= 0$. Using (\ref{FourierTransformationTheorem.1}) and
Rouche's theorem (applied for the sequence $P_n$ converging to $\Phi$) we can find a sequence $R_n\rightarrow\infty$ such that
\begin{eqnarray*}
\limsup _{n\rightarrow\infty}\frac{\eta (P_n,R_n)}{\mu _n}\leq 1.
\end{eqnarray*}
Now define
\begin{eqnarray*}
Q_n(z)=C_nz^{\alpha _n}\prod _{|z_{j,n}|\leq R_n}(1-\frac{z}{z_{j,n}}).
\end{eqnarray*}
Use conditions 1, 2, 3 of Theorem \ref{FourierTransformationTheorem}, arguing similar to the proofs of results in Sections 2 and 3, for every $z\in
\mathbb C$
\begin{eqnarray*}
\lim _{n\rightarrow\infty}|e^{z/2}|.|Q_n(z)|^{1/\mu _n}\leq 1.
\end{eqnarray*}
Fix $R>0$. If $z\in \mathbb R$ and $z\geq 2R$ then
\begin{eqnarray*}
\lim _{n\rightarrow\infty}|Q_n(z)|^{1/\mu _n}\leq e^{-R}.
\end{eqnarray*}
Since the set $\{z\in \mathbb R:~z\geq R\}$ is non-thin at $\infty$, from the above inequality we have by Theorem \ref{theo1}
\begin{eqnarray*}
\lim _{n\rightarrow\infty}|Q_n(z)|^{1/\mu _n}\leq e^{-R}
\end{eqnarray*}
for all $z\in \mathbb C$ and for all $R>0$. Since $R>0$ is arbitrary we have from the above that
\begin{eqnarray*}
\lim _{n\rightarrow\infty}|Q_n(z)|^{1/\mu _n}=0
\end{eqnarray*}
for all $z\in \mathbb C$. Then we have
\begin{eqnarray*}
\lim _{n\rightarrow\infty}|\Phi _n(z)|^{1/k_n}=0
\end{eqnarray*}
for all $z\in \mathbb C$, which is a contradiction.
\end{proof}
Refer to Theorem \ref{theo5} we note that the LHS of (\ref{theo5.6}) can be computed via the integrals of $\varphi (t)$. For example we have
\begin{proposition}
Let $\varphi \in L^1(\mathbb R)$ of compact support and $\varphi (t)=0$ for all $t\leq 0$. Define $\Phi (z)$ its Laplace transform by formula (\ref{fou})
with representation (\ref{Sec7.0}). Assume that $\Phi (0)\not= 0$. Then
\begin{equation}
\sum _{j}\frac{1}{z_j}=\frac{\sigma +\mu }{2}-\frac{\Phi '(0)}{\Phi (0)}=\frac{\sigma +\mu }{2}-\frac{\int _{-\infty}^{\infty}t\varphi (t)dt}{\int
_{-\infty}^{\infty}\varphi (t)dt}\label{lem4.1}.
\end{equation}
\label{lem4}\end{proposition}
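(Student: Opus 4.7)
The plan is straightforward: take the logarithmic derivative of the Hadamard-type representation (\ref{Sec7.0}) and evaluate at $z=0$, then identify $\Phi'(0)/\Phi(0)$ by differentiating under the integral sign in (\ref{fou}).

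First, since $\Phi(0)\neq 0$, one reads off from (\ref{Sec7.0}) that $\alpha=0$ and $C=\Phi(0)$. Collecting the real zeros together with the conjugate pairs of non-real zeros into a single indexed family $\{z_j\}$ (each zero appearing according to its multiplicity), the representation becomes
$$\Phi(z)=\Phi(0)\,e^{z(\sigma+\mu)/2}\prod_j\bigl(1-z/z_j\bigr).$$
Next I would make sense of $\sum_j 1/z_j$ by pairing each non-real $z_j$ with its conjugate $\overline{z_j}$: each such pair contributes $1/z_j+1/\overline{z_j}=2\,\mathrm{Re}(1/z_j)$, so
$$\sum_j\frac{1}{z_j}=\sum_{z_j\in\mathbb R}\frac{1}{z_j}+2\sum_{\mathrm{Im}\,z_j>0}\mathrm{Re}\frac{1}{z_j},$$
which converges absolutely thanks to the bound $\sum_j|\mathrm{Re}(1/z_j)|<\infty$ recalled from Lemma 3 of \cite{trong-tuyen1}.

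Then I would take the logarithmic derivative of $\Phi$ on a small disk about the origin where the product converges uniformly (e.g.\ $|z|<\min_j|z_j|/2$), and differentiate termwise. Grouping the conjugate zeros as above guarantees the uniform convergence of the resulting series of derivatives. Evaluating at $z=0$ yields
$$\frac{\Phi'(0)}{\Phi(0)}=\frac{\sigma+\mu}{2}-\sum_j\frac{1}{z_j},$$
which rearranges to the first equality of (\ref{lem4.1}). For the second equality, the compact support of $\varphi$ allows differentiation under the integral sign in (\ref{fou}) (the integrands $e^{zt}\varphi(t)$ and $t\,e^{zt}\varphi(t)$ are uniformly dominated on compact subsets of $\mathbb C$), yielding $\Phi'(0)=\int t\varphi(t)\,dt$ and $\Phi(0)=\int\varphi(t)\,dt$.

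The main subtlety is the pairing argument needed to justify termwise differentiation of the infinite product: without pairing conjugate zeros, the series $\sum_j 1/z_j$ is only conditionally convergent, so one must verify absolute uniform convergence of the paired series $(1/z_j+1/\overline{z_j})/(1-z/z_j)(1-z/\overline{z_j})\cdot(\text{suitable factors})$ on a neighborhood of $0$, which follows from $\sum|\mathrm{Re}(1/z_j)|<\infty$ together with $\sum 1/|z_j|^q<\infty$ for $q>1$. Once this is in hand, every remaining step is routine.
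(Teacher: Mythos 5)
Your proposal is correct and follows essentially the same route as the paper: logarithmic differentiation of the representation (\ref{Sec7.0}) at $z=0$ (with conjugate zeros grouped in pairs, exactly as the paper does in its formula (\ref{lem4.3})), combined with differentiation under the integral sign in (\ref{fou}). You merely supply the convergence justifications that the paper leaves as a ``formal'' computation.
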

\begin{proof}
Formally we have
\begin{equation}
\log \Phi (z)=\log C+\frac{\sigma +\mu }{2}z+\sum _{z_j\in\mathbb R}\log (1-\frac{z}{z_j})+\sum _{Im ~z_j>0}\log
[1-z(\frac{1}{z_j}+\frac{1}{\overline{z_j}})+\frac{z^2}{|z_j|^2}].\label{lem4.3}
\end{equation}
Then compute
\begin{eqnarray*}
\frac{d}{dz}\log \Phi (z)|_{z=0}
\end{eqnarray*}
using formulas (\ref{fou}) and (\ref{lem4.3}) we get (\ref{lem4.1}).
\end{proof}
Example 3: grouped Fourier series.

Similarly to above Examples, we can consider series of the form
\begin{eqnarray*}
s(z)=\sum _{j}z^{k_n}P_n(z)\sin (\lambda _nz)
\end{eqnarray*}
where $k_n\geq n$, $P_n$ is a polynomial of degree $deg(P_n)\leq k_n$, and $\lambda _n\in \mathbb C$ which satisfies
\begin{eqnarray*}
\lim _{n\rightarrow\infty}\lambda _n/k_n=0.
\end{eqnarray*}
Claim: If $P_n(-z)=-P_n(z)$ for all $z\in \mathbb C$ and $n\in \mathbb N$ (or $P_n(-z)=P_n(z)$ for all $z\in \mathbb C$ and $n\in \mathbb N$), and $s(z)$
converges on a set $E$ such that the set $\{z^2:~z\in E\}$ is non-thin at $\infty$ then $s(z)$ converges locally uniformly on $\mathbb C$.

This Claim can be proved in the same spirit as that of Lemma \ref{GroupedPowerSeries}, using the factorization of the function $\sin z$.
\section{Conclusions}
1. The growth of sequences of finite orders

We can develop the parallelism of results in Sections 2, 3, 4 and 5 to this general class of entire functions. We will not go into details but will
outline here how we can do for the general case.

i) Theorem \ref{theo5} can be proved in this generality with only a little change in the conclusion.

ii) Now we show how we can generalize Theorems \ref{theo1}.

We will assume that $(P_n)$ is a sequence of entire functions of finite orders $\rho _n\leq \rho$ where $\rho >0$ is a constant, with representation
\begin{equation}
P_n(z)=a_nz^{m_n}e^{W_{q_n}(z)}\prod _{j}G(\frac{z}{z_{n,j}},p_n)\label{Sec8.3}
\end{equation}
where $p_n=[\rho _n]$ the integer part of $\rho_n$, $W_{q_n}(z)$ is a polynomial of degree $q_n\leq \rho _n$ with $W_{q_n}(0)=0$, $z_{n,j}'s$ are zeros
of $P(z)$ different from $0$. Let us assume that $(k_n)$ is a sequence of positive numbers with $k_n\geq d^*(P_n)$ for all $n\in \mathbb N$. A less
strong condition on $k_n$ can also be used, however the presentation of the assumptions will be very complicated so we will skip it here.

We also assume the condition (\ref{theo1.2}), while condition (\ref{theo1.1}) is replaced by
\begin{equation}
\lim _{R\rightarrow\infty}\limsup _{n\rightarrow\infty}\frac{1}{k_n}|\sum _{|z_{n,j}|> R}\frac{1}{z_{n,j}^{p_n+1}}|=0.
\end{equation}
Under these conditions we have
\begin{eqnarray*}
\limsup _{n\rightarrow\infty}|P_n(z)|^{1/k_n}=\limsup _{n\rightarrow\infty}|Q_n(z)|^{1/k_n}
\end{eqnarray*}
for all $z\in \mathbb C$, where
\begin{equation}
Q_n(z)=\exp\{W_{q_n}(z)+\sum _{|z_{n,j}|\leq R_n}(\frac{z}{z_{n,j}}+\frac{z^2}{2z^2_{n,j}}+\ldots +\frac{z^{p_n}}{p_nz^{p_n}_{n,j}})\}\times
a_nz^{m_n}\prod _{|z_{n,j}|\leq R_n}(1-\frac{z}{z_{n_j}}).\label{Sec8.4}
\end{equation}
Also, from the assumptions we have
\begin{eqnarray*}
u(z)=\limsup _{n\rightarrow\infty}\frac{1}{k_n}\log |a_nz^{m_n}\prod _{|z_{n,j}|\leq R_n}(1-\frac{z}{z_{n_j}})|\leq \kappa \log ^+|z|+C
\end{eqnarray*}
for all $a\in\mathbb C$ for some constants $\kappa ,C>0$, and all polynomials
\begin{eqnarray*}
\frac{1}{k_n}[W_{q_n}(z)+\sum _{|z_{n,j}|\leq R_n}(\frac{z}{z_{n,j}}+\frac{z^2}{2z^2_{n,j}}+\ldots +\frac{z^{p_n}}{p_nz^{p_n}_{n,j}})]
\end{eqnarray*}
are polynomials of degree less than $\rho +1$, and have bounded coefficients. Passing to subsequences we may assume that
\begin{eqnarray*}
\lim _{n\rightarrow\infty}\frac{1}{k_n}[W_{q_n}(z)+\sum _{|z_{n,j}|\leq R_n}(\frac{z}{z_{n,j}}+\frac{z^2}{2z^2_{n,j}}+\ldots
+\frac{z^{p_n}}{p_nz^{p_n}_{n,j}})]=W(z)
\end{eqnarray*}
exists for all $z\in \mathbb C$ and is also a polynomial of degree $deg(W)\leq \rho +1$. Then
\begin{equation}
\limsup _{n\rightarrow\infty}\frac{1}{k_n}\log |P_n(z)|=Re(W(z))+u(z).\label{Sec8.5}
\end{equation}
From the above analysis, it is clear to see that in order to extend Theorem \ref{theo1} we need the condition on the set $E$ such that if
\begin{eqnarray*}
Re(W(z))+u(z)\leq 0
\end{eqnarray*}
for all $z\in E$, where $W(z)$ is a polynomial of degree $deg(W)\leq \rho +1$, $W(0)=0$ and $u(z)$ is a subharmonic function with growth bounded from
above by $\kappa \log ^+|z|+C$ then
\begin{eqnarray*}
Re(W(z))+u(z)\leq 0
\end{eqnarray*}
for all $z\in\mathbb C$. We do not know what is a sufficient and necessary conditions for sets $E$ having this property (we will discuss more about this
point in the second part). Here we give a sufficient condition for that property:
\begin{lemma} Let $E\subset \mathbb C$ be a collection of real lines $L$ going
through $0$ such that for all $(1+2k)!$-th roots $\zeta$ of unity (meaning $\zeta ^{(1+2k)!}=1$) and for all line $L\subset E$, then $\zeta L=\{\zeta
z:~z\in L\}\subset E$ (here $k!=1.2\ldots k$ is the factorial of $k$). Let $W(z)$ be a polynomial of degree $deg(W)\leq k$, $W(0)=0$. Let $u(z)$ be a
subharmonic function with growth bounded from above by $\kappa \log ^+|z|+C$. If
\begin{eqnarray*}
Re(W(z))+u(z)\leq 0
\end{eqnarray*}
for all $z\in E$ then
\begin{eqnarray*}
Re(W(z))+u(z)\leq 0
\end{eqnarray*}
for all $z\in\mathbb C$.
\end{lemma}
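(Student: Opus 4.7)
The plan is to reduce to a straightforward application of the Phragmén--Lindelöf principle in sectors. Set $w(z) := \mathrm{Re}(W(z)) + u(z)$. Since $\mathrm{Re}(W(z))$ is harmonic (as the real part of an entire function) and $u$ is subharmonic, $w$ is subharmonic on all of $\mathbb{C}$. Moreover $w$ grows at most polynomially of order $k$: one has $|\mathrm{Re}(W(z))| \leq C_1 (1+|z|)^k$ since $W$ is a polynomial of degree $\leq k$, while $u(z) \leq \kappa \log^+|z| + C$ by hypothesis. The hypothesis of the lemma becomes simply $w \leq 0$ on $E$, and I must show $w \leq 0$ throughout $\mathbb{C}$.

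Next I extract enough lines from $E$ to tile $\mathbb{C}$ with narrow sectors. Set $N := (1+2k)!$, which is even for $k \geq 1$. Pick any line $L \subset E$. Because $E$ is invariant under rotation by every $N$-th root of unity, and because among these only $\pm 1$ stabilize a line through the origin, the orbit of $L$ inside $E$ consists of exactly $N/2$ distinct real lines through the origin, with consecutive angular separation $2\pi/N$. The complement of their union is a disjoint collection of $N$ open sectors, each of opening angle $2\pi/N$.

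Finally I apply the Phragmén--Lindelöf principle inside each such open sector $S$. Upper semicontinuity of the subharmonic $w$ together with the hypothesis $w \leq 0$ on $E$ yields $\limsup_{z \to \zeta} w(z) \leq 0$ for every finite $\zeta \in \partial S$. The standard subharmonic Phragmén--Lindelöf for a sector of opening $\pi/\beta$ asserts: if a subharmonic function is $\leq 0$ on the finite boundary and has growth strictly less than order $\beta$ at infinity, then it is $\leq 0$ throughout. In our case $\beta = N/2$ and $w$ has growth order $k$; the required inequality $k < N/2$ is equivalent to $N > 2k$, which is comfortably satisfied by $N = (1+2k)!$. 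Hence $w \leq 0$ in each sector, and combined with $w \leq 0$ on $E$ this yields $w \leq 0$ on all of $\mathbb{C}$.

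The only genuinely delicate point is matching the growth order of $w$ to the opening angle of each sector, i.e., arranging the Phragmén--Lindelöf hypothesis $k < N/2$; the role of the factorial $N = (1+2k)!$ is precisely to leave large slack in this inequality (any $N \geq 2k+1$ would suffice). The degenerate case $k=0$ still works: then $W \equiv 0$ and $w = u$ has at most logarithmic growth, so Phragmén--Lindelöf in the two half-planes bounded by the single line $L \subset E$ already gives the conclusion. Beyond this, the argument is mechanical once the sector decomposition is in place.
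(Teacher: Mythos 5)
Your argument is correct, but it takes a genuinely different route from the paper's. The paper splits into cases: if $W\equiv 0$ it simply invokes non-thinness of $E$ at infinity; if $1\le \deg W=l\le k$, it counts the (at most $2l$) directions on the unit circle in which the real part of the leading term of $W$ vanishes, observes that the rotational symmetry of $E$ supplies more rays than that, and so produces a single ray $L\subset E$ along which $\mathrm{Re}(W(z))\to+\infty$; the hypothesis then forces $u\le -\mathrm{Re}(W)\to-\infty$ at a polynomial rate along $L$, and from the logarithmic upper bound on $u$ the paper concludes the much stronger statement $u\equiv-\infty$ (so the hypotheses are in fact degenerate whenever $W\not\equiv 0$). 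You instead tile the plane by the $N=(1+2k)!$ sectors of opening $2\pi/N$ cut out by the orbit of one line and apply the subharmonic Phragm\'en--Lindel\"of theorem in each sector, using $k<N/2$. What each buys: the paper's route gives more information (the identical vanishing of $u$), but its final step --- that a subharmonic function with a logarithmic upper bound which tends to $-\infty$ along a ray must be identically $-\infty$ --- is itself a nontrivial Phragm\'en--Lindel\"of/thinness fact that the paper leaves unjustified; your route is self-contained, rests only on the textbook sector form of Phragm\'en--Lindel\"of, and makes transparent why the factorial appears (any $N\ge 2k+1$ would do). Your orbit count (stabilizer $\{\pm 1\}$, hence $N/2$ lines and $N$ sectors), the boundary condition via upper semicontinuity, and the degenerate case $k=0$ are all handled correctly.
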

\begin{proof}
If $W\equiv 0$ then since $E$ is non-thin at $\infty$ we are done.

Assume that $W\not\equiv 0$. Then $1\leq deg(W)=l\leq k$. Since the number of common zeros of the system
\begin{eqnarray*}
Re(z^l)&=&0,\\
|z|&=&1
\end{eqnarray*}
does not greater than $2l$, from the assumptions it is easy to see that there exists at least one line $L\subset E$ such that
\begin{eqnarray*}
\lim _{z\in L,~|z|\rightarrow\infty}Re(W(z))=+\infty .
\end{eqnarray*}
Then we even get a more stronger conclusion $u(z)\equiv -\infty$.
\end{proof}

iii) Theorem \ref{theo2} can also be generalized in the same manner. We skip the details here.

2. Extremal functions

Let $E$ be a subset of $\mathbb C$. The discussion in part 1) shows that it is natural to consider the following function
\begin{equation}
V_{E,q}(z)=\sup \{U(z):~U\in \mathbb L_q,~U|_E\leq 0\}\label{Sec8.6}
\end{equation}
where $\mathbb L_q$ is the set of all functions $U(z):\mathbb C\rightarrow [-\infty ,+\infty )$ having the form
\begin{eqnarray*}
U(z)=Re(W(z))+u(z)
\end{eqnarray*}
where $W(z)$ is a polynomial of degree $deg(W)\leq q$ and $W(0)=0$, and where $u(z)\leq \log ^+|z|+C$ is a subharmonic function where $C$ is a constant
depending on $u$.

The function $V_{E,q}(z)$ in (\ref{Sec8.6}) is an analogous of the Siciak's extremal function (see \cite{klimek}). Similar functions were also considered
in \cite{bos-brudnyi-levenberg-totik}. We hope to return to this in a future paper.

\end{document}